\nonstopmode \numberwithin{equation}{section}
\begin{document}

\mainmatter              
\title{
Non-instantaneous Impulsive Riemann-Liouville Fractional Differential Systems: Existence and Controllability Analysis}
\titlerunning{Non-instantaneous impulsive fractional order systems}  
%
\author{Lavina Sahijwani\inst{1}, N. Sukavanam\inst{2} and Abdul Haq\inst{3}
}
\authorrunning{Lavina Sahijwani, N. Sukavanam, Abdul Haq} 
%
%
\institute{Department of Mathematics,\\
Indian Institute of Technology Roorkee, Roorkee, India,\\
\email{sahijwani.lavina@gmail.com},\\
\and
\email{nsukvfma@iitr.ac.in}
\and
\email{abdulhaqiitr@gmail.com}
}

\maketitle              

\begin{abstract}

The article is dedicated towards the study of fractional order non-linear differential systems with non-instantaneous impulses involving Riemann-Liouville derivatives with fixed lower limit and appropriate integral type initial conditions in Banach spaces. First, mild solution of the system is constructed and subsequently its existence is proven using Banach's fixed point theorem. Then, results of approximate controllability are established using concept of fractional semigroup and an iterative technique. 
Suitable examples are given in the end supporting the methodology along with pointing out correction in examples presented in previous articles.




\keywords{Riemann-Liouville Derivatives, 
Non-instantaneous Impulses, Nonlinear Systems, Fixed Point, Approximate Controllability}
\end{abstract}
\section{Introduction} \label{1}
\indent Controllability is the qualitative property of steering any dynamical system from initial arbitrary position to any desired final position utilizing appropriate control functions within stipulated time. Control theory, being a multidisciplinary branch stemmed from mathematics to engineering, has wide-ranging implementation in robotics, aeronautical and automobile engineering, image processing, biomathematical modelling and appreciably more. Control theory, in spaces of infinite and finite dimensions, have thoroughly been discussed in \cite{curtain} and \cite{SB1} respectively.
 The conception of controllability was first initiated and established by Kalman \cite{kalman} in 1960, and since then it is the matter of prime importance for the researchers worldwide. 
The results of existence and controllability
 for various differential systems of integer and fractional order involving Riemann-Liouville and Caputo derivatives have closely been demonstrated in many artefacts (refer \cite{rp ag1,rp ag main,bala,SB1,book hindawi,curtain,de gruyter,abdul,abdul:partial,donal jmaa,topo 2015,hey:podlubny,kalman,kilbas:srivast,visco,surendra:j.diff,muslim,diffusion,liu:siam,mahm2,mahm3,oldh:span,pazy,jde1,podlubny,min zhang,zhou}and references therein).\\ 
\indent The study of fractional calculus has long been admired from past three decades. The first work, exclusively committed 
to the study of fractional calculus, is the book by Oldham and Spanier {\cite{oldh:span}}, 1974.
Fractional derivatives serve as an exemplary 
mechanism for the 
interpretation of heritable
properties and memory of profuse scientific, physical and engineering phenomena. On account of finer accuracy and precision over integer-order models, fractional derivatives accelerate its applications in diffusion process, biological mathematical models, aerodynamics, viscoelasticity, electrical engineering, signal and image processing, control theory, heat equation, electricity mechanics, electrodynamics of complex medium, etc. (see \cite{bala,kilbas:srivast,visco,surendra:j.diff,diffusion,podlubny}).\\
\indent In domain of fractional calculus, Riemann-Liouville and Caputo type derivatives have maintained to be the centre of attention for numerous analysts. Riemann-Liouville derivative shows supremacy over Caputo in the sense that it allows the function involved to bear discontinuity at origin. On the other hand, it  doesn't allow the use of traditional initial conditions, the initial conditions involved in Riemann-Liouville case are either in the integral form or are weighted initial conditions. Heymans and Podlubny \cite{hey:podlubny} were the ones accredited for the manifestation of physical significance to the initial conditions used in 
regard of Riemann-Liouville fractional order viscoelastic systems.\\
\indent Recently, researchers worldwide gravitate towards the analysis of impulsive evolution systems with the focus on suitable mathematical modelling, existence of integral solution, its stability, controllability and much more. In realistic modelling,
the occurrence of perturbations, due to extrinsic intercessions, 
is inevitable, yet, unpredictable. These perturbations or sudden changes are nothing but impulses affecting the solution's behaviour majorly. The literature consists of artefacts
addressing mainly two types of impulses, one is, instantaneous impulses which happen for a very smaller period of time, and the other is, non-instantaneous impulses, which start all at once but hold for a finite time interval, for example, injection of a medical drug into the human body is sudden but takes time to stabilize its effect. The books \cite{book hindawi,de gruyter} and articles \cite{rp ag1,rp ag main,donal jmaa,topo 2015,min zhang} contribute to the study of various impulsive differential systems. \cite{rp ag1,muslim} addresses the fractional differential systems involving Caputo derivative with non instantaneous impulses. 
There is no such article in the literature so far addressing the analysis for approximate controllability of Riemann-Liouville fractional evolution systems having non-instantaneous impulses, and hence, is the motivation for the present artefact.\\ 
\indent The study of this article revolves around the following system:
\begin{eqnarray}
&&_{0}D_{t}^{\eta} z(t) = Az(t) + Bu(t) + h(t,z(t)), ~~~~~ t\in \cup_{r=0}^{m} (p_r, t_{r+1}], \label{system}\\ \nonumber
&&z(t)= \psi_r(t,z(t_{r}^{-})), ~~~ t\in (t_r, p_r],~~~ r = 1,2,....,m,\\ \nonumber
&&_{p_r}I_{t}^{1-\eta}z(t)|_{t=p_r} = \psi_r(p_r,z(t_{r}^{-})),~~~ r = 1,2,....,m,\\ \nonumber
&&_{0}I_{t}^{1-\eta}z(t)|_{t=0} = z_{0} \in Z,
\end{eqnarray}
where $_{0}D_{t}^{\eta}$ stands for the Riemann-Liouville fractional derivative of order $\eta$ with fixed lower limit as 0. $ A:D(A)\subseteq Z\rightarrow Z$ is densely defined and generates $C_0$-semigroup $T(t)(t>0)$. For each fixed $t$, $z(t)$ and $u(t)$ belong to Banach spaces $Z$ and $U$ respectively.  $B: L^q([0,a];U) \rightarrow L^q([0,a];Z)$ is a linear map. $h$ is a function from $[0,a] \times Z$ to $Z$. The points $p_r$ and $t_r$ satisfy the relation $0 = p_0<t_1 < p_1 < t_2 < ... < p_m < t_{m+1} = p_{m+1} = a$. The impulses start at points $t_r, ~r = 1,2,...,m$ and continue for the interval $(t_r,p_r]$. For $r=1,2,...m$, $ \psi_r$ are the impulsive functions to be discussed later. $z(t_r)=z(t_r^-) = lim_{\triangle \rightarrow 0^+}z(t_r - \triangle) ~\text{and} ~z(t_r^+) = lim_{\triangle \rightarrow 0^+}z(t_r + \triangle)$ denotes the left and right hand limit of $z(t)$ at $t_r, ~ r = 1,2,..,m$ respectively. \\
\indent This artefact 
is drafted as: Section \ref{2} gives the briefing for basic results and definitions. Section \ref{3} is dedicated to construction of mild solution. Results for the existence of solutions based on suitable assumptions are apparent in Section \ref{4}. Section \ref{5} accords with the sufficient assumptions and controllability conditions. Section \ref{6} presents examples in support of the theory presented. The article is winded up with concluding remarks and future scope in Section \ref{7}.

\section{Preliminary facts} \label{2}

This segment provides a quick referral to some fundamental concepts and definitions which are beneficial for the smooth study of the paper.\\
\indent Consider the Banach space 
\begin{eqnarray*}
 PC_{1-\eta}([0,a];Z) = &&\Big{ \{ }  z:z \in C \Big( \big( \cup_{r=0}^{m}(p_r,t_{r+1}) \big) \cup \big(\cup_{r=1}^{m}(t_r,p_r)\big); Z \Big),\\
&&~  z(p_r) = z(p_r^-) = lim_{\triangle \rightarrow 0^+} z(p_r - \triangle ) < \infty,~ r=1,2,...,m,\\
&&~ (t-p_r)^{1-\eta}\|z(t)\|<\infty,~ \text{for}~ t\in (p_r,p_{r+1}], ~r=0,1,...,m,\\
&&~ z(t_r) = z(t_r^-) = lim_{\triangle \rightarrow 0^+} z(t_r - \triangle ) < \infty, ~ r = 1,2,...,m\Big{ \} }.
\end{eqnarray*} 
Introduce the norm $\|z\|_{[0,a]}$ on $PC_{1-\eta}([0,a];Z)$ as 
$\|z\|_{[0,a]}=\text{max}_{r=0,1,..,m} \|z\|_r$
where
$\|z\|_r = \text{sup}_{t\in (p_r,p_{r+1}]} (t-p_r)^{1-\eta}\|z(t)\| ~\text{for}~ r=0,1,...,m $.

\begin{remark}
$PC_{1-\eta}([0,a];Z)$ is  a dense subset of $L^q([0,a];Z)$ if $q<\frac{1}{1-\eta}$.
\end{remark}
 Throughout this article, it is considered that $ M = \text{sup}_{t \in [0,a]} \|T(t)\| < \infty $ and $\tau = \underset{r= 0,1,...,m}{max} (t_{r+1} - p_r)$.\\
\indent Some definitions related to fractional integrals and derivatives are :
\begin{definition} 
The Riemann-Liouville $\eta^{th}$-order fractional integral is written in terms of the following integral
\begin{equation*}
_{t_0}I^{\eta}_{t}z(t)=\frac{1}{\Gamma \left( \eta \right)} \int_{t_0}^{t} (t-r)^{\eta - 1}z(r)dr, \hspace{6mm} \eta > 0,
\end{equation*}
where $\Gamma$ denotes the gamma function. 
\end{definition}

\begin{definition}The fractional $\eta^{th}$-order Riemann-Liouville derivative is defined by the following expression
\begin{equation*}
_{t_0}D^{\eta}_{t}z(t)=\frac{1}{\Gamma \left( n - \eta \right)} {\Bigg{(} \frac{d}{dt} \Bigg{)}}^n \int_{t_0}^{t} (t-r)^{n- \eta - 1}z(r)dr,
\end{equation*}
where $0 \leq n-1 < \eta < n$.
\end{definition}
\begin{definition}
A function of the complex variable $w$ defined by
\begin{eqnarray}
E_\eta (w) = \sum_{i=0}^{\infty} \dfrac{w^i}{\Gamma(\eta i + 1)} \nonumber
\end{eqnarray}
is known as the Mittag-Leffler function in one parameter.
\end{definition}

\begin{lemma}{\cite{kilbas:srivast}}
\begin{itemize}
\item[(i)] If $f \in C(t_0,a]$, then for any point $t\in (t_0,a]$ $$_{t_0}D_{t}^{\eta}(_{t_0}I_{t}^{\eta}f(t)) = f(t).$$
\item[(ii)] If $f \in C(t_0,a]$ and $_{t_0}I_{t}^{1-\eta}f(t) \in C(t_0,a]$, then for any point $t \in (t_0,a]$ $$_{t_0}I_{t}^{\eta}(_{t_0}D_{t}^{\eta}f(t)) = f(t) - \dfrac{_{t_0}I_{t}^{1-\eta}f(t)|_{t=t_0}}{\Gamma(\eta)}(t-t_0)^{\eta-1}.$$
\end{itemize}
\end{lemma}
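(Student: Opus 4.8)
Note first that the range relevant to this paper is $0<\eta<1$, so $n=1$ and $_{t_0}D_t^{\eta}=\frac{d}{dt}\,{_{t_0}I_t^{1-\eta}}$, while $_{t_0}I_t^{1-\eta}f(t)|_{t=t_0}$ is to be read as $\lim_{t\downarrow t_0}{_{t_0}I_t^{1-\eta}}f(t)$. The plan is to reduce both parts to three elementary properties of Riemann--Liouville integrals acting on functions $f\in C(t_0,a]$ (which may carry an integrable singularity at $t_0$): \emph{(a)} the semigroup law $_{t_0}I_t^{\alpha}\big({_{t_0}I_t^{\beta}}f\big)={_{t_0}I_t^{\alpha+\beta}}f$ for $\alpha,\beta>0$, obtained by interchanging the order of integration (Fubini) and evaluating the resulting inner integral as the Beta function $B(\alpha,\beta)=\Gamma(\alpha)\Gamma(\beta)/\Gamma(\alpha+\beta)$; \emph{(b)} the Leibniz-type rule $\frac{d}{dt}\,{_{t_0}I_t^{\beta+1}}h={_{t_0}I_t^{\beta}}h$ for $\beta>0$ and locally integrable $h$, since upon differentiating under the integral the boundary term carries the vanishing factor $(t-r)^{\beta}\big|_{r=t}=0$; and \emph{(c)} $\frac{d}{dt}\,{_{t_0}I_t^{1}}f=f$, the fundamental theorem of calculus for the continuous function $f$.

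\noindent For \emph{(i)}, I would simply chain these: $_{t_0}D_t^{\eta}\big({_{t_0}I_t^{\eta}}f\big)(t)=\frac{d}{dt}\,{_{t_0}I_t^{1-\eta}}\big({_{t_0}I_t^{\eta}}f\big)(t)\overset{(a)}{=}\frac{d}{dt}\,{_{t_0}I_t^{1}}f(t)\overset{(c)}{=}f(t)$. For \emph{(ii)}, set $g:={_{t_0}I_t^{1-\eta}}f$, so the hypothesis says $g\in C(t_0,a]$ with finite endpoint value $g(t_0):={_{t_0}I_t^{1-\eta}}f(t)|_{t=t_0}$ and $_{t_0}D_t^{\eta}f=g'$. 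The key step is the fractional ``integration-by-parts'' identity
\[
{_{t_0}I_t^{\eta}}g'(t)=\frac{d}{dt}\,{_{t_0}I_t^{\eta}}g(t)-\frac{g(t_0)}{\Gamma(\eta)}(t-t_0)^{\eta-1},
\]
which I would derive by applying \emph{(b)} with $\beta=\eta$, $h=g'$ to write $_{t_0}I_t^{\eta}g'=\frac{d}{dt}\,{_{t_0}I_t^{\eta+1}}g'$, then using \emph{(a)} together with $_{t_0}I_t^{1}g'(t)=g(t)-g(t_0)$ and $_{t_0}I_t^{\eta}1=(t-t_0)^{\eta}/\Gamma(\eta+1)$ to get $_{t_0}I_t^{\eta+1}g'={_{t_0}I_t^{\eta}}g-g(t_0)(t-t_0)^{\eta}/\Gamma(\eta+1)$, and finally differentiating in $t$. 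Since $_{t_0}I_t^{\eta}g={_{t_0}I_t^{\eta}}\big({_{t_0}I_t^{1-\eta}}f\big)\overset{(a)}{=}{_{t_0}I_t^{1}}f$, property \emph{(c)} gives $\frac{d}{dt}\,{_{t_0}I_t^{\eta}}g=f$, and substituting into the displayed identity yields $_{t_0}I_t^{\eta}\big({_{t_0}D_t^{\eta}}f\big)(t)=f(t)-\frac{g(t_0)}{\Gamma(\eta)}(t-t_0)^{\eta-1}$, as claimed.

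\noindent The main obstacle is not the algebra but the low regularity: since $f$ is assumed continuous only on the left-open interval $(t_0,a]$, every integral here may be weakly singular at $t_0$, so Fubini's theorem in \emph{(a)} and the differentiation under the integral sign in \emph{(b)}, \emph{(c)} must be justified by dominated convergence, using that $(t-r)^{\eta-1}$ and $(t-r)^{-\eta}$ are integrable in $r$ over $[t_0,t]$. One must also check that $g={_{t_0}I_t^{1-\eta}}f$ genuinely possesses a finite limit at $t_0$ (this is precisely the content of $_{t_0}I_t^{1-\eta}f\in C(t_0,a]$, read consistently with membership in $PC_{1-\eta}([0,a];Z)$) and that $g'$ is locally integrable on $(t_0,a]$ up to $t_0$, which is what legitimises the Leibniz step \emph{(b)} and the identity $_{t_0}I_t^{1}g'=g-g(t_0)$. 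Once these technical points are secured, the computation above is routine.
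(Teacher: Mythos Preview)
The paper does not prove this lemma; it merely cites it from the Kilbas--Srivastava--Trujillo monograph. Your argument is exactly the classical one found there: the semigroup property of Riemann--Liouville integrals collapses part~(i) to the fundamental theorem of calculus, and part~(ii) is obtained by writing $_{t_0}D_t^{\eta}f=g'$ with $g={_{t_0}I_t^{1-\eta}}f$ and performing a fractional integration by parts. The regularity caveats you flag (integrability of $f$ near $t_0$, absolute continuity of $g$ so that $_{t_0}I_t^{1}g'=g-g(t_0)$ is legitimate) are real and are why the cited reference in fact imposes slightly stronger hypotheses than the bare $f\in C(t_0,a]$ quoted here; that is a looseness in the paper's transcription of the lemma, not a defect in your proof.
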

\begin{proposition}{\cite{podlubny}}
The underneath holds true:
\begin{itemize}
\item[(i)]  For $ \alpha > 0$, $0<\eta<1$, 
\begin{eqnarray*}
_{t_0}I_{t}^{\eta}(t-t_0)^{\alpha-1} &=& \dfrac{\Gamma(\alpha)}{\Gamma(\alpha + \eta)}(t-t_0)^{\alpha + \eta - 1}, \\
_{t_0}D_{t}^{\eta}(t-t_0)^{\alpha -1} &=& \dfrac{\Gamma(\alpha)}{\Gamma(\alpha-\eta)}(t-t_0)^{\alpha - \eta - 1}.
\end{eqnarray*}
\item[(ii)] For $0<\eta <1$,
\begin{eqnarray*}
_{t_0}I_{t}^{\eta}(t-t_0)^{-\eta} &=& \Gamma(1-\eta),\\
_{t_0}D_{t}^{\eta}(t-t_0)^{\eta-1}&=&0.
\end{eqnarray*}
\end{itemize}
\end{proposition}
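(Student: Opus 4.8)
The plan is to reduce all four identities to a single computation --- the Riemann--Liouville integral of a power function --- after which the derivative formulas and the special cases in $(ii)$ follow by one differentiation and by specialising the parameter $\alpha$.

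First I would prove the integral identity in $(i)$. Writing out the definition,
$$_{t_0}I_{t}^{\eta}(t-t_0)^{\alpha-1}=\frac{1}{\Gamma(\eta)}\int_{t_0}^{t}(t-r)^{\eta-1}(r-t_0)^{\alpha-1}\,dr,$$
I substitute $r=t_0+s(t-t_0)$ with $s\in[0,1]$, so that $t-r=(t-t_0)(1-s)$, $r-t_0=s(t-t_0)$ and $dr=(t-t_0)\,ds$. This turns the integral into
$$\frac{(t-t_0)^{\alpha+\eta-1}}{\Gamma(\eta)}\int_{0}^{1}(1-s)^{\eta-1}s^{\alpha-1}\,ds=\frac{(t-t_0)^{\alpha+\eta-1}}{\Gamma(\eta)}\,B(\alpha,\eta),$$
and the Euler identity $B(\alpha,\eta)=\Gamma(\alpha)\Gamma(\eta)/\Gamma(\alpha+\eta)$ gives the claim. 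The assumptions $\alpha>0$, $0<\eta<1$ are exactly what guarantees the Beta integral converges at both endpoints $s=0$ and $s=1$.

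For the derivative formula in $(i)$ I would take $n=1$ in the definition (legitimate since $0<\eta<1$) and observe that
$$_{t_0}D_{t}^{\eta}(t-t_0)^{\alpha-1}=\frac{1}{\Gamma(1-\eta)}\frac{d}{dt}\int_{t_0}^{t}(t-r)^{-\eta}(r-t_0)^{\alpha-1}\,dr=\frac{d}{dt}\Big[{}_{t_0}I_{t}^{1-\eta}(t-t_0)^{\alpha-1}\Big].$$
Feeding the already established integral identity (with $\eta$ replaced by $1-\eta$) into the bracket yields $\frac{\Gamma(\alpha)}{\Gamma(\alpha+1-\eta)}(t-t_0)^{\alpha-\eta}$; differentiating produces the factor $\alpha-\eta$, and the functional equation $\Gamma(\alpha+1-\eta)=(\alpha-\eta)\Gamma(\alpha-\eta)$ collapses the constant to $\Gamma(\alpha)/\Gamma(\alpha-\eta)$. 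Because the inner integral has been evaluated in closed form, the Riemann--Liouville derivative is manifestly well defined and no differentiation-under-the-integral argument is needed. Finally, $(ii)$ comes out by inspection: setting $\alpha=1-\eta$ in the integral identity gives $_{t_0}I_{t}^{\eta}(t-t_0)^{-\eta}=\Gamma(1-\eta)/\Gamma(1)=\Gamma(1-\eta)$, and setting $\alpha=\eta$ in the derivative identity gives $\Gamma(\eta)/\Gamma(0)\cdot(t-t_0)^{-1}$, which is $0$. The only point requiring care --- and the closest thing to an obstacle here --- is reading the right-hand sides when a Gamma argument hits a pole (namely $\Gamma(0)$): this must be interpreted via the limit $1/\Gamma(0)=0$, or, equivalently, one checks the last identity directly by noting that $_{t_0}I_{t}^{1-\eta}(t-t_0)^{\eta-1}$ equals the constant $\Gamma(\eta)$, whose derivative vanishes.
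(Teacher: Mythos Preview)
Your proof is correct and is the standard Beta-function computation. The paper itself does not give a proof of this proposition at all: it is stated with a citation to Podlubny and left unproved, so there is nothing to compare against beyond noting that your argument is essentially the textbook derivation one would find in that reference.
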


\begin{lemma}{\cite{zhou}}
The operator $T_\eta(t)$ possesses the underneath properties:
\begin{itemize}
\item[(i)] For every fixed $t\geq0$, operator $T_\eta(t)$ is linear and bounded, such that for any $z\in Z$,
\begin{eqnarray*}
\|T_\eta(t)z\| \leq \frac{M}{\Gamma(\eta)}{\|z\|}.
\end{eqnarray*}
where $M$ is a constant such that $\|T(t)\| \leq M$ for $t \in [0,\tau]$.
\item[(ii)] $T_\eta(t)(t\geq 0)$ is strongly continuous.
\end{itemize}
\end{lemma}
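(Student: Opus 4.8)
The plan is to work from the integral representation of $T_\eta(t)$ underlying the notation of \cite{zhou}. Recall that $T_\eta(t)z = \eta\int_0^\infty \theta\,\xi_\eta(\theta)\,T(t^\eta\theta)z\,d\theta$, where $\xi_\eta$ is the one-sided Wright-type probability density on $(0,\infty)$ satisfying $\xi_\eta(\theta)\ge 0$, $\int_0^\infty \xi_\eta(\theta)\,d\theta = 1$, and the moment identity $\int_0^\infty \theta^{\delta}\xi_\eta(\theta)\,d\theta = \frac{\Gamma(1+\delta)}{\Gamma(1+\eta\delta)}$ for $0\le\delta\le 1$; in particular $\int_0^\infty \theta\,\xi_\eta(\theta)\,d\theta = \frac{1}{\Gamma(1+\eta)}$. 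For each fixed $t\ge 0$ the map $\theta\mapsto T(t^\eta\theta)z$ is strongly continuous in $\theta$, hence strongly measurable, and is bounded in norm by $M\|z\|$, so the Bochner integral defining $T_\eta(t)z$ is well posed; linearity of $T_\eta(t)$ is then inherited from linearity of $T(t)$ and of the integral.

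For part (i), I would estimate directly, using $\Gamma(1+\eta)=\eta\Gamma(\eta)$:
\[
\|T_\eta(t)z\| \le \eta\int_0^\infty \theta\,\xi_\eta(\theta)\,\|T(t^\eta\theta)\|\,\|z\|\,d\theta \le \eta M\|z\|\int_0^\infty \theta\,\xi_\eta(\theta)\,d\theta = \frac{\eta M}{\Gamma(1+\eta)}\|z\| = \frac{M}{\Gamma(\eta)}\|z\|,
\]
which is exactly the claimed bound. For part (ii), fix $z\in Z$ and $0\le t_1<t_2$ in $[0,\tau]$ and write
\[
\|T_\eta(t_2)z - T_\eta(t_1)z\| \le \eta\int_0^\infty \theta\,\xi_\eta(\theta)\,\big\|T(t_2^\eta\theta)z - T(t_1^\eta\theta)z\big\|\,d\theta .
\]
For each $\theta>0$, continuity of $s\mapsto s^\eta\theta$ together with strong continuity of $T(\cdot)$ forces the integrand to tend to $0$ as $t_2\to t_1$, and it is dominated by the fixed integrable majorant $2M\|z\|\,\theta\,\xi_\eta(\theta)$. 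The dominated convergence theorem for Bochner integrals then gives $\|T_\eta(t_2)z - T_\eta(t_1)z\|\to 0$, so $t\mapsto T_\eta(t)z$ is continuous on $[0,\tau]$; the endpoint case $t_1=0$ is covered as well, since there $t_2^\eta\theta\to 0$ and $T(0)=I$, establishing strong continuity including at the origin.

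The only genuinely technical point is the justification of the dominated convergence step in the Bochner setting — namely verifying strong measurability of $\theta\mapsto T(t^\eta\theta)z$ and exhibiting the uniform integrable majorant $2M\|z\|\,\theta\,\xi_\eta(\theta)$ — together with the behaviour as $t\to 0^+$; everything else follows immediately from the uniform semigroup bound $\|T(t)\|\le M$ on $[0,\tau]$ and the normalization and first-moment formula for $\xi_\eta$. Since the statement is quoted from \cite{zhou}, one may alternatively simply cite it and reproduce this short argument for completeness.
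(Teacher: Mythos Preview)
Your argument is correct and is precisely the standard one from \cite{zhou}: linearity is inherited, the bound in (i) follows from $\|T(t^\eta\theta)\|\le M$ together with the first-moment identity $\int_0^\infty \theta\,\xi_\eta(\theta)\,d\theta=1/\Gamma(1+\eta)$ and $\Gamma(1+\eta)=\eta\Gamma(\eta)$, and (ii) is obtained by dominated convergence against the majorant $2M\|z\|\,\theta\,\xi_\eta(\theta)$ using strong continuity of $T(\cdot)$.

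Note, however, that the present paper does not supply its own proof of this lemma; it is merely quoted from \cite{zhou} as a preliminary. So there is nothing to compare against beyond the citation itself, and your closing remark is exactly right: one may simply cite \cite{zhou}, with the short argument you gave included for completeness if desired.
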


\section{Construction of mild solution} \label{3}
In order to construct the mild solution $z(t)$ for the system (\ref{system}), we proceed with the discussion below:\\
\textbf{Case 1:} For $t\in (0,t_1]$,
\begin{eqnarray*}
z(t) = t^{\eta-1}T_{\eta}(t)z_0 + \int_{0}^{t}(t-s)^{\eta-1}T_{\eta}(t-s)\big{[}Bu(s)+ h(s,z(s))\big{]}ds.
\end{eqnarray*}
\textbf{Case 2:} For $t \in (t_1,p_1]$,
\begin{eqnarray*}
z(t) &=& \psi_1(t,z(t_1^-)) \nonumber\\
&=& \psi_1\big{(}t, t_1^{\eta-1}T_{\eta}(t_1)z_0 + \int_{0}^{t_1}(t_1-s)^{\eta-1}T_{\eta}(t_1-s)\big{[}Bu(s)+ h(s,z(s))\big{]}ds\big{)}
\end{eqnarray*}
\textbf{Case 3:} For $t \in (p_1,t_2]$,
\begin{align*}
_{0}D_{t}^{\eta}z(t) ={}& \dfrac{1}{\Gamma(1-\eta)} \dfrac{d}{dt} \int_{0}^{t}(t-s)^{1-\eta - 1} z(s) ds\\
={}& \dfrac{1}{\Gamma(1-\eta)} \dfrac{d}{dt} \int_{0}^{t_1}(t-s)^{-\eta} z(s) ds + \dfrac{1}{\Gamma(1-\eta)} \dfrac{d}{dt} \int_{t_1}^{p_1}(t-s)^{-\eta} \psi_1(s,z(t_1^-)) ds\\
&\quad+ \dfrac{1}{\Gamma(1-\eta)} \dfrac{d}{dt} \int_{p_1}^{t}(t-s)^{-\eta} z (s) ds\\
={}& \dfrac{-\eta}{\Gamma(1-\eta)}\int_{0}^{t_1}\dfrac{z(s)}{(t-s)^{1+\eta}}ds + \dfrac{-\eta}{\Gamma(1-\eta)}\int_{t_1}^{p_1}\dfrac{\psi_1(s,z(t_1^-))}{(t-s)^{1+\eta}}ds +~ _{p_1}D_{t}^{\eta}z(t)\\
={}& - \phi_1(t,z(t)) +~ _{p_1}D_{t}^{\eta}z(t)
\end{align*}
Thus,
\begin{eqnarray*}
&&_{p_1}D_{t}^{\eta}z(t) =  Az(t) + Bu(t) + h(t,z(t)) +  \phi_1(t,z(t)) \\
&&_{p_1}I_{t}^{1-\eta}z(t)|_{t=p_1} = \psi_1(p_1,z(t_{1}^{-}))
\end{eqnarray*}
Hence, for $t \in (p_1,t_2]$
\begin{eqnarray*}
z(t) &=& (t-p_1)^{\eta-1}T_{\eta}(t-p_1)\psi_1(p_1,z(t_{1}^{-})) \nonumber\\
&&~~+ \int_{p_1}^{t}(t-s)^{\eta-1}T_{\eta}(t-s)\big{[}Bu(s)+h(s,z(s))+ \phi_1(s,z(s))\big{]}ds
\end{eqnarray*}
Continuing this process for each $r=2,3,\ldots,m$ and taking lower limit of Riemann-Liouville derivative as $p_r$ for each interval $(p_r,t_{r+1}]$, we define the mild solution as follows:

\begin{definition}
A function $z \in PC_{1-\eta}([0,a];Z)$ is called a mild solution of the system (\ref{system}) if it satisfies the following integral equation:
\begin{eqnarray}
z(t) = 
\begin{cases}
     (t-p_r)^{\eta-1}T_{\eta}(t-p_r)\psi_r(p_r,z(t_{r}^{-}))+ \displaystyle{\int_{p_r}^{t}}(t-s)^{\eta-1}T_{\eta}(t-s)\big[ Bu(s)\\
 +h(s,z(s))+ \phi_r(s,z(s))\big]ds,  \quad \text{for}\ t\in (p_r,t_{r+1}], ~~~r = 0,1,...,m \\

      \psi_r(t,z(t_{r}^{-})),  ~~~~~~~~~~~~~~~~\text{for} \ t \in (t_r,p_r], ~~~ r = 1,2,...,m
    \end{cases}
 \label{mildsol}
\end{eqnarray}
where

\begin{eqnarray}
&\psi_0 = z_0 , \nonumber \\
&\phi_0 = 0,   \label{phidef}\\
&\phi_r(t,z(t)) = \dfrac{\eta}{\Gamma(1-\eta)}\displaystyle{\sum_{k=0}^{r-1}}\bigg{(} \displaystyle{\int_{p_k}^{t_{k+1}}}\frac{z(s)}{(t-s)^{1+\eta}} ds + \displaystyle{\int_{t_{k+1}}^{p_{k+1}}} \dfrac{\psi_{k+1}(s,z(t_{k+1}^-))}{(t-s)^{1+ \eta}} ds \bigg{)}, \nonumber
\end{eqnarray}
for $ r=1,2,...,m$; and
\begin{eqnarray}
T_{\eta}(t)&=&\eta \int_{0}^{\infty} \theta \xi_{\eta}(\theta)T(t^{\eta} \theta)d\theta ,\\
\xi_{\eta}(\theta)&=&\dfrac{1}{\eta} \theta^{-1-\frac{1}{\eta}}\varpi_\eta{(\theta^{-\frac{1}{\eta}})}, \nonumber\\
\varpi_\eta{(\theta)}&=& \dfrac{1}{\pi}{\sum_{n=1}^{\infty}}(-1)^{n-1}{\theta}^{-n\eta-1}\frac{\Gamma(n\eta + 1)}{n!}sin(n\pi \eta), \hspace{4mm} \theta \in (0,\infty), \nonumber
\end{eqnarray}
and $\xi_{\eta}$ is a probability density function defined on $(0,\infty)$, that is,\\

$\xi_{\eta}(\theta) \geq 0$\; and\; $\int_{0}^{\infty} \xi_{\eta}(\theta)d\theta = 1$.
\end{definition}

\begin{definition}
Let $z(t,u)$ be a mild solution of the system (\ref{system}) at time t corresponding to a control $u(\cdot)\in L^q([0,a];U)$. The set $K_a(h)=\lbrace z(a,u) \in Z; u(\cdot)\in L^q([0,a];U \rbrace$ is called the reachable set for final time a. If $K_a(h)$ becomes dense in Z, the system (\ref{system}) is approximately controllable on [0,a].

\end{definition}

\section{Existence of mild solution} \label{4}
This segment establishes the existence and uniqueness of mild solution for the system (\ref{system}) utilizing the Banach fixed point theorem. The results are  based on the below mentioned assumptions:
\begin{itemize}
\item[(H0)] $q \in (\frac{1}{\eta}, \frac{1}{1-\eta})$.
\item[(H1)] A constant $\kappa>0$ exists in a way satisfying \\
$$ \|h(t,z)-h(t,y)\|_Z \leq \kappa \|z-y\|_Z~~\forall ~z,y \in Z. $$
\item[(H2)] A function $\varsigma(.) ~\mbox{in}~ L^q([0,a];\mathbb{R^+}), ~q>\frac{1}{\eta}$, and a constant $d>0$ exists such that\\
$$\|h(t,z)\|_Z \leq \varsigma(t) + d (t-p_r)^{1-\eta}\|z\|_{Z}$$ for a.e. $t\in (p_r,p_{r+1}]; r=0,1,...,m ~ \mbox{and}~ \forall ~ z \in Z$. 
\item[(H3)] For $r=1,2,...,m$, the impulsive functions $\psi_{r}$ defined from $[t_r,p_r] \times Z$ to $Z$ are continuous and there exist constants $b_r \in (0,1]$, such that \\
$$\| \psi_r(t,z) - \psi_r(t,y) \|_Z \leq b_r \|z-y\|_Z.$$
\item[(H4)] The constant $\nu < 1$, ~
where
\begin{eqnarray*}
\nu &=& \underset{r=1,2,...,m}{max}~ \dfrac{Mb_r}{\Gamma(\eta) (t_r - p_{r-1})^{1-\eta}} + \dfrac{M\kappa\Gamma(\eta)}{\Gamma(2\eta)}(t_{r+1} - p_r)^\eta \\
&& +\dfrac{M}{\Gamma(1+\eta) \Gamma(1-\eta)} \left( \displaystyle{\sum_{k=0}^{r-1}}\Big{(}\frac{t_{k+1}-{p_k}}{p_r - t_{k+1}}\Big{)}^\eta 
+ \sum_{k=0}^{r-2} \dfrac{b_{k+1}(t_{r+1}-p_r)}{(t_{k+1}-p_k)^{1-\eta}(p_r - p_{k+1})^\eta} \right)\\
&& + \dfrac{ Mb_r (t_{r+1} - p_r)^{1-\eta}}{(t_{r} - p_{r-1})^{1-\eta}}.
\end{eqnarray*}
\end{itemize}
\begin{theorem}
The nonlinear system (\ref{system}) admits a unique mild solution in $PC_{1-\eta}([0,a];Z)$ for each control $u(\cdot) \in L^q([0,a];U)$ under the assumptions $(H0)-(H4)$.
\end{theorem}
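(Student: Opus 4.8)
The plan is to recast \eqref{mildsol} as a fixed point equation and apply the Banach fixed point theorem on the Banach space $PC_{1-\eta}([0,a];Z)$. Define the operator $\mathcal{F}$ on $PC_{1-\eta}([0,a];Z)$ by letting $(\mathcal{F}z)(t)$ be the right-hand side of (\ref{mildsol}); explicitly, on each $(p_r,t_{r+1}]$
\[
(\mathcal{F}z)(t)=(t-p_r)^{\eta-1}T_{\eta}(t-p_r)\psi_r(p_r,z(t_r^-))+\int_{p_r}^{t}(t-s)^{\eta-1}T_{\eta}(t-s)\bigl[Bu(s)+h(s,z(s))+\phi_r(s,z(s))\bigr]ds,
\]
and $(\mathcal{F}z)(t)=\psi_r(t,z(t_r^-))$ on each $(t_r,p_r]$. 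A function is a mild solution of (\ref{system}) precisely when it is a fixed point of $\mathcal{F}$, so it suffices to show that $\mathcal{F}$ is well defined on $PC_{1-\eta}([0,a];Z)$ and is a contraction there.

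First I would verify that $\mathcal{F}$ maps $PC_{1-\eta}([0,a];Z)$ into itself. On the intervals $(t_r,p_r]$ this follows from the continuity of $\psi_r$ in (H3) and the finiteness of $z(t_r^-)$; the relevant weighted factor $(t-p_{r-1})^{1-\eta}$ is bounded on $(t_r,p_r]$. On $(p_r,t_{r+1}]$, the term $(t-p_r)^{\eta-1}T_\eta(t-p_r)\psi_r(p_r,z(t_r^-))$ has finite weighted norm by the bound $\|T_\eta(t)z\|\le \tfrac{M}{\Gamma(\eta)}\|z\|$; the term involving $Bu$ and $h$ is controlled by Hölder's inequality, using $Bu\in L^q$, the growth bound (H2), and the assumption $q\in(\tfrac1\eta,\tfrac1{1-\eta})$ from (H0), which places $(t-s)^{\eta-1}$ and the weight $(s-p_r)^{\eta-1}$ in the appropriate conjugate Lebesgue spaces; and the kernels $(t-s)^{-1-\eta}$ entering $\phi_r(s,z(s))$ are integrated only over strictly earlier subintervals, hence are non-singular and harmless. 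Strong continuity of $T_\eta$ together with dominated convergence then gives continuity of $\mathcal{F}z$ on each open subinterval and the existence of the one-sided limits demanded in the definition of $PC_{1-\eta}$.

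The core of the argument is the contraction estimate. For $z,y\in PC_{1-\eta}([0,a];Z)$ I would bound $(t-p_r)^{1-\eta}\|(\mathcal{F}z)(t)-(\mathcal{F}y)(t)\|$ on $(p_r,t_{r+1}]$ term by term. The impulsive term, via (H3) and $\|z(t_r^-)-y(t_r^-)\|\le(t_r-p_{r-1})^{\eta-1}\|z-y\|_{r-1}$, produces the coefficient $\tfrac{Mb_r}{\Gamma(\eta)(t_r-p_{r-1})^{1-\eta}}$. The nonlinearity term, via the Lipschitz bound (H1) and the Beta-type identity $\int_{p_r}^{t}(t-s)^{\eta-1}(s-p_r)^{\eta-1}ds=\tfrac{\Gamma(\eta)^2}{\Gamma(2\eta)}(t-p_r)^{2\eta-1}$ (a case of Proposition~(i)), produces $\tfrac{M\kappa\Gamma(\eta)}{\Gamma(2\eta)}(t_{r+1}-p_r)^{\eta}$. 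The $\phi_r$ term, after inserting the definition (\ref{phidef}), estimating each inner kernel $(s-\sigma)^{-1-\eta}$ over the preceding subintervals, applying (H3) to the $\psi_{k+1}$ pieces, and integrating the outer kernel $(t-s)^{\eta-1}T_\eta(t-s)$, produces exactly the two sums appearing in $\nu$ in (H4) together with the boundary coefficient $\tfrac{Mb_r(t_{r+1}-p_r)^{1-\eta}}{(t_r-p_{r-1})^{1-\eta}}$ stemming from the $k=r-1$ summand of $\phi_r$. On $(t_r,p_r]$ one has the simpler bound coming directly from (H3). Bounding every $\|z-y\|_k$ by $\|z-y\|_{[0,a]}$, taking the supremum over $t$ on each subinterval and the maximum over $r$, one obtains $\|\mathcal{F}z-\mathcal{F}y\|_{[0,a]}\le\nu\,\|z-y\|_{[0,a]}$ with $\nu$ exactly the constant of (H4). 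Since $\nu<1$, $\mathcal{F}$ is a contraction, and Banach's fixed point theorem yields a unique fixed point, i.e.\ a unique mild solution in $PC_{1-\eta}([0,a];Z)$.

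The main obstacle is the bookkeeping of the $\phi_r$ term: because $\phi_r$ is itself a finite sum of singular-type integrals over \emph{all} previous subintervals and is then convolved against $(t-s)^{\eta-1}T_\eta(t-s)$, one must interchange orders of integration carefully, identify which of the distances $p_r-t_{k+1}$, $p_r-p_{k+1}$, $t_{k+1}-p_k$ supply the lower bounds keeping each kernel bounded, and match the resulting weighted-norm coefficients one-to-one with the terms collected in $\nu$. The remaining parts — well-definedness and the impulsive and nonlinearity estimates — are routine once one has the $T_\eta$ bound, Hölder's inequality under (H0), and the Beta integral.
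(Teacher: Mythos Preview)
Your proposal is essentially the paper's own proof: define the solution operator from \eqref{mildsol}, check self-mapping, and obtain the contraction constant $\nu$ by combining the $T_\eta$ bound and (H3) for the impulse term, the Beta identity $\int_{p_r}^{t}(t-s)^{\eta-1}(s-p_r)^{\eta-1}ds=B(\eta,\eta)(t-p_r)^{2\eta-1}$ for the $h$-term under (H1), and the explicit double-integral estimates for $\phi_r$ (carried out in the paper as \eqref{singint1}--\eqref{doubint3}) that yield exactly the two sums and the boundary coefficient you describe. The one point where you diverge is on $(t_r,p_r]$: the paper defines its operator there as $\psi_r\bigl(t,G(z(t_r^-))\bigr)$, i.e.\ composing with $G$ rather than evaluating at $z(t_r^-)$, so that its Case~2 estimate becomes $b_r\le 1$ times the already-established Case~1 bound at $t_r$ and is absorbed into $\nu$; with your direct definition the Case~2 constant comes out as $b_r(p_r-p_{r-1})^{1-\eta}/(t_r-p_{r-1})^{1-\eta}$, which is not visibly dominated by $\nu$, so you would either need the paper's compositional trick or an extra hypothesis to close that interval.
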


%
\begin{proof}
Consider the operator $G$ as 
\begin{eqnarray}
(Gz)(t) = 
 \begin{cases}
     (t-p_r)^{\eta-1}T_{\eta}(t-p_r)\psi_r(p_r,z(t_{r}^{-})) + \displaystyle{\int_{p_r}^{t}}(t-s)^{\eta-1}T_{\eta}(t-s)\big{[}Bu(s)\\
+h(s,z(s))+ \phi_r(s,z(s))\big{]}ds, ~~\text{for}\ t\in (p_r,t_{r+1}], ~~~r = 0,1,\ldots,m \\
    
      \psi_r(t,G(z(t_{r}^{-}))),  ~~~~~~~~~~~~~~~~\text{for} \ t \in (t_r,p_r], ~~~r=1,\ldots,m.
 \end{cases}
\end{eqnarray}
\indent First, it is evident that $G$ maps $PC_{1-\eta}([0,a];Z)$ into itself.
It is now required to prove $G$ as a contraction operator on $PC_{1-\eta}([0,a];Z)$.\\
Let $z,y \in PC_{1-\eta}([0,a];Z)$, then\\
\\
\textbf{Case 1:}~~ For $t \in (p_r,t_{r+1}]$, ~~$r= 0,1,2,\ldots,m$.
\begin{eqnarray}
&&(t-p_r)^{1-\eta} \big{\|} (Gz)(t) - (Gy)(t) \big{\|} \nonumber \\
&& \quad \leq \big{\|} T_\eta (t-p_r)[\psi_r(p_r,z(t_{r}^{-})) - \psi_r(p_r,y(t_{r}^{-})] \big{\|}  \nonumber \\
&&\quad \quad + (t-p_r)^{1-\eta} \int_{p_r}^{t} (t-s)^{\eta-1} \big{\|} T_\eta(t-s)\big{[} \big{(} h(s,z(s)) - h(s,y(s)) \big{)} \big{]} \big{\|} ds \nonumber\\
&&\quad \quad + (t-p_r)^{1-\eta} \int_{p_r}^{t} (t-s)^{\eta-1} \big{\|} T_\eta(t-s)\big{[} \big{(} \phi_r(s,z(s)) - \phi_r(s,y(s)) \big{)} \big{]} \big{\|} ds \nonumber  \\
&&\quad \leq  \frac{M}{\Gamma(\eta)} \Big{\|} [\psi_r(p_r,z(t_{r}^{-}) - \psi_r(p_r,y(t_{r}^{-})] \Big{\|}_Z + \frac{M\kappa (t-p_r)^{1-\eta}}{\Gamma(\eta)} \int_{p_r}^{t} (t-s)^{\eta - 1}  \|z(s) - y(s)\|_Z ds \nonumber \\
&&\quad \quad +\frac{M (t-p_r)^{1-\eta}}{\Gamma(\eta)} \int_{p_r}^{t} (t-s)^{\eta - 1} \big{\|} \phi_r(s,z(s)) - \phi_r(s,y(s)) \big{\|}_Z ds.  \label{halfeq}
\end{eqnarray}
Solving $ (t-p_r)^{1-\eta} \int_{p_r}^{t} (t-s)^{\eta - 1} \big{\|} \phi_r(s,z(s)) - \phi_r(s,y(s)) \big{\|} ds$ seperately for $r=1,2,\ldots,m$ as for  we proceed with
\begin{eqnarray*}
\phi_r(s,z(s)) = \dfrac{\eta}{\Gamma(1-\eta)}\displaystyle{\sum_{k=0}^{r-1}}\bigg{(} \displaystyle{\int_{p_k}^{t_{k+1}}}\frac{z(s_1)}{(s-s_1)^{1+\eta}}ds_1 + \displaystyle{\int_{t_{k+1}}^{p_{k+1}}} \frac{\psi_{k+1}(s_1,z(t_{k+1}^-))}{(s-s_1)^{1+ \eta}}ds_1 \bigg{)},~ r=1,2,\ldots,m.
\end{eqnarray*}
and for $r=0$, $\phi_0 = 0$.\\
So, basing our following evaluations for $t\in (p_r,t_{r+1}], ~ r=1,\ldots,m$,
\begin{eqnarray}
\int_{p_k}^{t_{k+1}} \dfrac{\|z(s_1)-y(s_1)\|}{{(s-s_1)}^{\eta+1}} ds_1 &=& \int_{p_k}^{t_{k+1}} \dfrac{{(s_1 - p_k)}^{1-\eta} \|z(s_1)-y(s_1)\|}{{(s_1 - p_k)}^{1-\eta}{(s-s_1)}^{\eta+1}} ds_1 \nonumber \\
& \leq & \dfrac{{(t_{k+1} - p_k)}^\eta}{\eta {(s-t_{k+1})}^{\eta + 1} }\| z-y \|_k  \label{singint1}
\end{eqnarray}
Using (\ref{singint1}) and $ t-t_{k+1} > t - p_r, ~ k=0,1,\ldots,r-1$, we obtain
\begin{eqnarray}
&&{(t-p_r)}^{1-\eta} \int_{p_r}^{t} \int_{p_k}^{t_{k+1}} {(t-s)}^{\eta-1} \dfrac{\|z(s_1)-y(s_1)\|}{{(s-s_1)}^{\eta+1}} ds_1 ds \nonumber \\
&&\quad \leq {(t-p_r)}^{1-\eta} \dfrac{{(t_{k+1} - p_k)}^\eta \| z-y \|_k}{\eta }  \int_{p_r}^{t}  \dfrac{1}{{(t-s)}^{1-\eta}{(s-t_{k+1})}^{\eta + 1}} ds \nonumber \\
&&\quad = {(t-p_r)}^{1-\eta}\dfrac{{(t_{k+1} - p_k)}^\eta \| z-y \|_k}{\eta} \dfrac{{(t-p_r)}^\eta}{\eta {(p_r - t_{k+1})}^\eta (t - t_{k+1})} \nonumber \\
&&\quad \leq \dfrac{\|z-y\|_k}{\eta^2} {\left( \dfrac{t_{k+1} - p_k}{p_r - t_{k+1}} \right)}^\eta . \label{doubint1}
\end{eqnarray}
Next, for $k=0,1,\ldots,r-1$ and $r=1,2,\ldots,m$, we have
\begin{eqnarray}
&&\int_{t_{k+1}}^{p_{k+1}} \dfrac{\| \psi_{k+1}(s_1,z(t_{k+1})) - \psi_{k+1}(s_1,y(t_{k+1}))\|}{{(s-s_1)}^{\eta + 1}} ds_1 \nonumber \\
&&\quad \leq b_{k+1} \int_{t_{k+1}}^{p_{k+1}} \dfrac{{(t_{k+1}-p_k)}^{1-\eta} \| z(t_{k+1})-y(t_{k+1})\|}{{(t_{k+1}-p_k)}^{1-\eta}{(s-s_1)}^{\eta + 1}} ds_1 \nonumber \\
&&\quad \leq  \dfrac{b_{k+1} \| z-y\|_{k} }{\eta{(t_{k+1}-p_k)}^{1-\eta}} \Bigg{[} \dfrac{1}{{(s-p_{k+1})}^\eta} - \dfrac{1}{{(s-t_{k+1})}^{\eta}} {\Bigg{]}} \nonumber \\
&&\quad \leq \dfrac{b_{k+1} \| z-y\|_{k} }{\eta{(t_{k+1}-p_k)}^{1-\eta}} . \dfrac{1}{{(s-p_{k+1})}^\eta}. \label{singint2}
\end{eqnarray}
Using (\ref{singint2}) for $k=0,1,2,\ldots,r-2$ and $r=2,3,\ldots,m$, we obtain
\begin{eqnarray}
&&{(t-p_r)}^{1-\eta} \int_{p_r}^{t} \int_{t_{k+1}}^{p_{k+1}} {(t-s)}^{\eta-1} \dfrac{\| \psi_{k+1}(s_1,z(t_{k+1})) - \psi_{k+1}(s_1,y(t_{k+1}))\|}{{(s-s_1)}^{\eta + 1}} ds_1 ds \nonumber \\
&&\quad \leq  \dfrac{b_{k+1} \| z-y\|_{k} }{\eta {(t_{k+1} - p_k)}^{1-\eta}} {(t-p_r)}^{1-\eta} \int_{p_r}^{t} \dfrac{1}{{{(s-p_{k+1})}^{\eta}}{(t-s)}^{1-\eta}} ds \nonumber \\
&&\quad \leq \dfrac{b_{k+1} \| z-y\|_{k} }{\eta {(t_{k+1} - p_k)}^{1-\eta}} {(t-p_r)}^{1-\eta}. \dfrac{{(t-p_r)}^{\eta}}{\eta {(p_r - p_{k+1})}^\eta} \nonumber \\
&&\quad \leq \dfrac{b_{k+1}{(t_{r+1} - p_r)}}{\eta^2{(t_{k+1} - p_k)}^{1-\eta}{(p_r - p_{k+1})}^\eta}.\| z-y\|_{k} \label{doubint2}
\end{eqnarray}
and for $k=r-1$ and $r=1,2,\ldots,m$, it is 
\begin{eqnarray}
&&{(t-p_r)}^{1-\eta} \int_{p_r}^{t} \int_{t_{r}}^{p_{r}} {(t-s)}^{\eta-1} \dfrac{\| \psi_{r}(s_1,z(t_{r})) - \psi_{r}(s_1,y(t_{r}))\|}{{(s-s_1)}^{\eta + 1}} ds_1 ds \nonumber\\
&&\quad \leq {(t-p_r)}^{1-\eta} \dfrac{b_{r} \| z-y\|_{r-1} }{\eta{(t_{r}-p_{r-1})}^{1-\eta}} \int_{p_r}^{t} \dfrac{1}{{(t-s)}^{1-\eta}{{(s-p_{r})}^\eta}} ds \nonumber\\
&&\quad = {(t-p_r)}^{1-\eta} \dfrac{b_{r} \| z-y\|_{r-1} }{\eta{(t_{r}-p_{r-1})}^{1-\eta}} B(\eta,1-\eta) \nonumber \\
&&\quad \leq {(t_{r+1}-p_r)}^{1-\eta} \dfrac{b_{r} \| z-y\|_{r-1} }{\eta{(t_{r}-p_{r-1})}^{1-\eta}} \Gamma(\eta)\Gamma(1-\eta). \label{doubint3}
\end{eqnarray}
Combining equations (\ref{phidef}), (\ref{doubint1}), (\ref{doubint2}), and (\ref{doubint3}), we obtain
\begin{eqnarray}
&&(t-p_r)^{1-\eta} \int_{p_r}^{t} (t-s)^{\eta - 1} \big{\|} \phi_r(s,z(s)) - \phi_r(s,y(s)) \big{\|} ds \nonumber \\
&&\quad \leq \dfrac{\|z-y\|_{[0,a]}}{\eta \Gamma(1-\eta)} \Bigg{(} \sum_{k=0}^{r-1} {\left( \dfrac{t_{k+1} - p_k}{p_r - t_{k+1}} \right)}^\eta + \sum_{k=0}^{r-2} \dfrac{b_{k+1}{(t_{r+1} - p_r)}}{{(t_{k+1} - p_k)}^{1-\eta}{(p_r - p_{k+1})}^\eta} \nonumber \\
&&\quad \quad + \dfrac{b_r (t_{r+1}-p_r)^{1-\eta}}{(t_{r}-p_{r-1})^{1-\eta}} \Gamma(1+\eta) \Gamma(1-\eta) \Bigg{)}. \label{normphi}
\end{eqnarray}

Applying equation (\ref{normphi}) in equation (\ref{halfeq}), we proceed as 
\begin{eqnarray}
&&(t-p_r)^{1-\eta} \big{\|} (Gz)(t) - (Gy)(t) \big{\|} \nonumber \\
&&\quad \leq \dfrac{Mb_r}{\Gamma(\eta)} \dfrac{\| z - y \|_{r-1}}{ (t_r - p_{r-1})^{1-\eta}} + \dfrac{M\kappa(t-p_r)^{1-\eta}}{\Gamma(\eta)}\|z-y\|_r \int_{p_r}^{t} (t-s)^{\eta - 1} (s-p_r)^{\eta - 1} ds\nonumber \\
&&\quad \quad + \dfrac{M \| z - y \|_{[0,a]}}{\Gamma(1 + \eta) \Gamma(1 - \eta)} \Bigg{(} \sum_{k=0}^{r-1} {\bigg{(} \dfrac{t_{k+1}-p_k}{p_r - t_{k+1}} \bigg{)}}^\eta + \sum_{k=0}^{r-2} \dfrac{b_{k+1}(t_{r+1}-p_r)}{(t_{k+1}-p_k)^{1-\eta}(p_r - p_{k+1})^\eta} \Bigg{)} \nonumber \\
&&\quad \quad + \dfrac{Mb_r (t_{r+1}-p_r)^{1-\eta} }{(t_{r} - p_{r-1})^{1-\eta}} \| z - y \|_{[0,a]} \nonumber \\
&&\quad \leq \Bigg{[}\dfrac{Mb_r}{\Gamma(\eta)(t_r - p_{r-1})^{1-\eta}} + \dfrac{M\kappa \Gamma(\eta)}{\Gamma(2\eta)} (t_{r+1} - p_r)^\eta  \nonumber \\
&&\quad \quad + \dfrac{M}{\Gamma(1 + \eta) \Gamma(1 - \eta)} \Bigg{(} \sum_{k=0}^{r-1} {\bigg{(} \dfrac{t_{k+1}-p_k}{p_r - t_{k+1}} \bigg{)}}^\eta + \sum_{k=0}^{r-2} \dfrac{b_{k+1}(t_{r+1}-p_r)}{(t_{k+1}-p_k)^{1-\eta}(p_r - p_{k+1})^\eta} \Bigg{)} \nonumber \\
&& \quad \quad + \dfrac{ Mb_r (t_{r+1} - p_r)^{1-\eta}}{(t_{r} - p_{r-1})^{1-\eta}} \Bigg{]} \| z - y \|_{[0,a]} \nonumber \\
&& \quad = \nu \| z - y \|_{[0,a]}. \label{c1}
\end{eqnarray}
\textbf{Case 2:} ~~For $t \in (t_r,p_r]$,~~$r= 1,2,...,m$
\begin{eqnarray}
&&(t-p_{r-1})^{1-\eta} \big{\|} (Gz)(t) - (Gy)(t) \big{\|} \nonumber \\
&&\quad = (t-p_{r-1})^{1-\eta} \| \psi_r(t,G(z(t_{r}^{-})) - \psi_r(t,G(y(t_{r}^{-}))\| \nonumber \\
&& \quad\leq b_r (t-p_{r-1})^{1-\eta} \| G(z(t_r)) - G(y(t_r)) \|_Z \nonumber \\
&& \quad \leq b_r \Bigg{[}\dfrac{Mb_r}{\Gamma(\eta)(t_r - p_r)^{1-\eta}} + \dfrac{M\kappa \Gamma(\eta)}{\Gamma(2\eta)} (t_{r+1} - p_r)^\eta  \nonumber \\
&&\quad \quad + \dfrac{M}{\Gamma(1 + \eta) \Gamma(1 - \eta)} \Bigg{(} \sum_{k=0}^{r-1} {\bigg{(} \dfrac{t_{k+1}-p_k}{p_r - t_{k+1}} \bigg{)}}^\eta  + \sum_{k=0}^{r-2} \dfrac{b_{k+1}(t_{r+1}-p_r)}{(t_{k+1}-p_k)^{1-\eta}(p_r - p_{k+1})^\eta} \Bigg{)}  \nonumber \\
&& \quad \quad + \dfrac{ Mb_r (t_{r+1} - p_r)^{1-\eta}}{(t_{r} - p_{r-1})^{1-\eta}} \Bigg{]} \| z - y \|_{[0,a]} \nonumber \\
&& \quad \leq \nu \|z-y\|_{[0,a]}. \label{c2}
\end{eqnarray}
Therefore, on combining equations (\ref{c1}) and (\ref{c2}), we obtain 
\begin{eqnarray*}
\|Gz-Gy\|_{[0,a]} \leq \nu \| z - y \|_{[0,a]} 
\end{eqnarray*}
Thus, $G$ is   contraction and it is evident through Banach fixed point theorem that $G$ possess a unique fixed point $z(\cdot)$ in $PC_{1-\eta}([0,a];Z)$ which serves as the requisite solution of system (\ref{system}).
\end{proof}

\section{Controllability Results} \label{5}
We define the following operators:\\
The Nemytskii operator 
corresponding to the nonlinear function $h$ for $t \in (p_r,t_{r+1}]$, $r=0,1,...,m$, be denoted by
$\Omega_h : C_{1-\eta}([p_r,t_{r+1}];Z)\rightarrow L^q([p_r,t_{r+1}];Z)$ is defined by 
\begin{eqnarray*}
\Omega_h(z)(t)=h(t,z(t)),
\end{eqnarray*}
where Banach space $C_{1-\eta}([p_r,t_{r+1}];Z)= \{z: (t-p_r)^{1-\eta}z(t) \in C([p_r,t_{r+1}];Z)\}$.
The Nemytskil operator 
corresponding to the function $\phi_r$  be denoted by
$\Omega_{\phi_r} : C_{1-\eta}([p_r,t_{r+1}];Z)\rightarrow L^q([p_r,t_{r+1}];Z)$ is defined by 
\begin{eqnarray*}
\Omega_{\phi_r}(z)(t)=\phi_r(t,z(t)).
\end{eqnarray*}
The bounded linear operator $\mathbb{F}: L^q([p_r,t_{r+1}];Z)\rightarrow Z$ as 
\begin{eqnarray*}
\mathbb{F}z=\int_{p_r}^{t_{r+1}}(t_{r+1}-s)^{\eta - 1}T_\eta(t_{r+1}-s)z(s)ds.
\end{eqnarray*}
The following assumptions are made to prove approximate controllability of the system(\ref{system})
\begin{itemize}
\item[(H5)] There is a constant $\tilde{\kappa} > 0$ such that \\
$$ \|h(t,z) - h(t,y)\|_Z \leq \tilde{\kappa}  {(t-p_r)^{1-\eta}} \|z-y\|_Z$$ 
$~~ \forall~~ z,y \in Z$ and $t\in (p_r,p_{r+1}]$, for $r=0,1,\ldots,m$.

\item[(H6)] For $r=1,2,\ldots,m$, the impulsive functions $\psi_{r}$ defined from $[t_r,p_r] \times Z$ to $Z$ are continuous and there exist constants $c_r \in (0,1]$, such that 
$$\| \psi_r(t,z(t)) \| \leq c_r \|z(t)\| ~\mbox{for each}~ t\in [t_r,p_r] ~\mbox{and}~ z\in Z.$$
\item[(H7)] $\mu E_\eta (M \tilde{\kappa}\tau) < 1$.
\item[(H8)] For any $\epsilon > 0$ and $\vartheta^r(\cdot) \in L^q([p_r,t_{r+1}];Z)$, there exists a $u^r(\cdot) \in L^q([0,a];Z)$ and hence $u^r(\cdot) \in L^q([p_r,t_{r+1}];U)$ satisfying \\
$$\| \mathbb{F}\vartheta^r - \mathbb{F}Bu^r\|_Z < \epsilon $$
$$\|Bu^r(\cdot)\|_{L^q([p_r,t_{r+1}];Z)} < \aleph \|\vartheta^r(\cdot)\|_{L^q} $$ where $\aleph$ is a constant independent of $\vartheta^r(\cdot) \in L^q([p_r,t_{r+1}];Z)$ and 
\begin{eqnarray}
\aleph \Bigg[\tilde{\kappa}(t_{r+1}-p_r)^{\frac{1}{q}} + \dfrac{1}{\Gamma(1-\eta)}\sum_{k=0}^{r-1}\bigg(\dfrac{(t_{k+1}-p_k)^\eta (t_{r+1}-p_r)^{\frac{1}{q}}}{(p_r - t_{k+1})^{1+\eta}} \nonumber\\
 + \dfrac{b_{k+1}(t_{r+1}-p_{k+1})^{\frac{1}{q}-\eta}}{(t_{k+1}-p_k)^{1-\eta}(1-\eta q)^{\frac{1}{q}}} \bigg) \Bigg]\dfrac{\varrho E_\eta (M\tilde{\kappa}\tau)}{1-\mu E_\eta (M\tilde{\kappa}\tau)} < 1 \label{mainass}
\end{eqnarray}
for each $r = 1,2,...,m$.
\end{itemize}
%
%
%
%
%
\begin{lemma}
Under the assumptions ${(H0),(H2),(H3)}$ and ${(H5)}-{(H7)}$, any mild solution of the system (\ref{system}) satisfies the following:
\begin{itemize}
\item[(i)] $\|z(\cdot,u)\|_{[0,a]} \leq  \Lambda E_\eta (Md\tau) ~~~~ for~ any~ u(\cdot) \in L^q([0,a];U),$\\
\item[(ii)] $\| z (\cdot,u) - y(\cdot,v)\|_{[0,a]} \leq \dfrac{\varrho E_\eta (M\tilde{\kappa} \tau)}{1-\mu E_\eta (M\tilde{\kappa} \tau)} \|Bu-Bv\|_{L^q}$,
\end{itemize}
where
\begin{eqnarray*} 
&&\Lambda = \underset{r= 0,1,...,m}{max} \frac{M}{\Gamma(\eta)} \Bigg{[} \dfrac{c_r}{{(t_r - p_{r-1})^{1-\eta}}} \|z\|_{r-1} + {\Big{(} \frac{q-1}{q\eta-1}\Big{)}}^{\frac{q-1}{q}} (t_{r+1}-p_r)^{1-\frac{1}{p}} \Big{(}\|Bu\|_{L^q} + \| {\varsigma} \|_{L^q} \Big{)} \\
&& \quad \quad + \dfrac{1}{\eta \Gamma(1-\eta)} \Bigg{(} \sum_{k=0}^{r-1} {\bigg{(} \dfrac{t_{k+1}-p_k}{p_r - t_{k+1}} \bigg{)}}^\eta \|z\|_k + (t_{r+1}-p_r)\sum_{k=0}^{r-2} \dfrac{c_{k+1}}{(p_r - p_{k+1})^\eta} \Bigg{)}\\
&& \quad \quad + c_r \Gamma(\eta) (t_{r+1}-p_r)^{1-\eta} \Bigg{]}.\\
&&\varrho = \dfrac{M}{\Gamma(\eta)}{\bigg{(}\dfrac{q-1}{q \eta - 1 } \bigg{)}} ^ {\frac{q-1}{q}}\tau^{1 - \frac{1}{q}}\\
%
&& \mu = \underset{r=1,2,...,m}{max} \dfrac{Mb_r}{\Gamma(\eta) (t_r - p_{r-1})^{1-\eta}} 
 +\dfrac{M}{\Gamma(1+\eta) \Gamma(1-\eta)} \Bigg{(} \displaystyle{\sum_{k=0}^{r-1}}\Big{(}\frac{t_{k+1}-{p_k}}{p_r - t_{k+1}}\Big{)}^\eta \\
&& \quad \quad + \sum_{k=0}^{r-2} \dfrac{b_{k+1}(t_{r+1}-p_r)}{(t_{k+1}-p_k)^{1-\eta}(p_r - p_{k+1})^\eta} \Bigg{)} + \dfrac{ Mb_r (t_{r+1} - p_r)^{1-\eta}}{(t_{r} - p_{r-1})^{1-\eta}}.
%
\end{eqnarray*}
\end{lemma}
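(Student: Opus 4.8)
The plan is to read off both bounds directly from the integral representation (\ref{mildsol}), estimating it block by block with the operator bound $\|T_{\eta}(t)z\|\le\frac{M}{\Gamma(\eta)}\|z\|$, with H\"older's inequality on the convolution integrals, and with a singular (fractional) Gronwall inequality used to absorb the contribution that is linear in the unknown.

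For (i), fix $u(\cdot)$ and set $z=z(\cdot,u)$. On each block $(p_r,t_{r+1}]$ I multiply the first branch of (\ref{mildsol}) by $(t-p_r)^{1-\eta}$ and bound the summands one by one. The impulsive term is controlled by (H6) together with the observation that $z(t_r^-)$ lies in the previous block, so that $\|z(t_r^-)\|\le\|z\|_{r-1}(t_r-p_{r-1})^{\eta-1}$, which gives the first term of $\Lambda$ (with the obvious modification for $r=0$, where $z_0$ plays the role of the impulsive data). The integrals of $Bu(s)$ and of $\varsigma(s)$ (the latter from (H2)) are treated by H\"older's inequality with conjugate exponents $q$ and $\frac{q}{q-1}$, which is admissible by (H0) since then $(\eta-1)\frac{q}{q-1}>-1$, and produce the factor $\left(\frac{q-1}{q\eta-1}\right)^{\frac{q-1}{q}}(t_{r+1}-p_r)^{1-\frac{1}{q}}$ recorded in $\Lambda$. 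The $\phi_r$-term is estimated exactly as in the proof of the existence theorem of Section \ref{4}: the computations leading to (\ref{singint1})--(\ref{doubint3}) carry over once the Lipschitz constant $b_{k+1}$ is replaced by the growth constant $c_{k+1}$ from (H6), which supplies the double sum and the term $c_r\Gamma(\eta)(t_{r+1}-p_r)^{1-\eta}$. What remains is the part $d\,(s-p_r)^{1-\eta}\|z(s)\|$ of (H2): with $\varphi(t):=(t-p_r)^{1-\eta}\|z(t)\|$ this contributes the self-referential term $\frac{Md}{\Gamma(\eta)}\int_{p_r}^{t}(t-s)^{\eta-1}\varphi(s)\,ds$, so that $\varphi(t)\le\Lambda_r+\frac{Md}{\Gamma(\eta)}\int_{p_r}^{t}(t-s)^{\eta-1}\varphi(s)\,ds$ where $\Lambda_r$ collects the constants above. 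The singular Gronwall inequality then yields $\varphi(t)\le\Lambda_r E_{\eta}(Md\,(t-p_r)^{\eta})$; since $t-p_r\le\tau$ we get $\|z\|_r\le\Lambda_r E_{\eta}(Md\tau)$, and taking the maximum over $r=0,1,\dots,m$ with $\Lambda=\max_r\Lambda_r$ proves (i).

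For (ii), put $z=z(\cdot,u)$, $y=y(\cdot,v)$ and repeat the same reduction for the difference $z-y$. On $(p_r,t_{r+1}]$ the impulsive term is now handled by the Lipschitz hypothesis (H3), the $\phi_r$-difference by (\ref{singint1})--(\ref{doubint3}) exactly as in the existence theorem (which is precisely where the coefficients $b_r$ and $b_{k+1}$ entering $\mu$ come from), and the control term by H\"older's inequality, producing $\varrho\,\|Bu-Bv\|_{L^q}$ with $\varrho$ as stated. The nonlinear term is handled by (H5): $\|h(s,z(s))-h(s,y(s))\|\le\tilde{\kappa}\,(s-p_r)^{1-\eta}\|z(s)-y(s)\|$, so with $\varphi(t):=(t-p_r)^{1-\eta}\|z(t)-y(t)\|$ one arrives at $\varphi(t)\le\mu\,\|z-y\|_{[0,a]}+\varrho\,\|Bu-Bv\|_{L^q}+\frac{M\tilde{\kappa}}{\Gamma(\eta)}\int_{p_r}^{t}(t-s)^{\eta-1}\varphi(s)\,ds$ (here the nonlocal $\phi_r$-contributions are bounded by $\|z-y\|_{[0,a]}$ as in (\ref{normphi}), which produces the coefficient $\mu$). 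The singular Gronwall inequality together with $t-p_r\le\tau$ gives $\|z-y\|_r\le E_{\eta}(M\tilde{\kappa}\tau)\left(\mu\,\|z-y\|_{[0,a]}+\varrho\,\|Bu-Bv\|_{L^q}\right)$, and the branch $(t_r,p_r]$ yields a smaller bound through (H3). Taking the maximum over $r$, using (H7) to move the term $\mu E_{\eta}(M\tilde{\kappa}\tau)\|z-y\|_{[0,a]}$ to the left-hand side, and dividing by $1-\mu E_{\eta}(M\tilde{\kappa}\tau)>0$ yields (ii).

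The main obstacle is to set up the singular (fractional) Gronwall inequality correctly in the presence of the weight $(t-p_r)^{1-\eta}$ and of the many nonlocal $\phi_r$-contributions coupling all preceding blocks, and to check that the induction over $r=0,1,\dots,m$ actually closes; in particular the rearrangement in (ii) requires $\mu E_{\eta}(M\tilde{\kappa}\tau)<1$, which is exactly assumption (H7). Everything else is a careful but routine bookkeeping of constants, which assemble into the displayed expressions for $\Lambda$, $\varrho$ and $\mu$.
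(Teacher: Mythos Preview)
Your proposal is correct and follows essentially the same route as the paper: block-by-block estimation of the weighted solution (or difference) via the operator bound for $T_\eta$, H\"older on the $Bu$ and $\varsigma$ integrals, the $\phi_r$-estimates (\ref{singint1})--(\ref{doubint3}) from Section~\ref{4} (with $c_{k+1}$ in place of $b_{k+1}$ for part~(i)), and then the fractional Gronwall inequality to absorb the self-referential term, followed in~(ii) by the rearrangement permitted by~(H7). One small slip: the self-referential term in~(i) actually carries the prefactor $(t-p_r)^{1-\eta}$, which the paper bounds by $(t_{r+1}-p_r)^{1-\eta}\le\tau^{1-\eta}$ before applying Gronwall, and this is what produces $E_\eta(Md\tau)$ rather than $E_\eta(Md\tau^{\eta})$.
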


\begin{proof}(i) Let $z \in PC_{1-\eta}{([0,a];Z)}$ be a mild solution of the system $(1)$ corresponding to the control $u(\cdot) \in L^q([0,a];U)$, then \\
\begin{eqnarray*}
z(t) = 
\begin{cases}
     (t-p_r)^{\eta-1}T_{\eta}(t-p_i)\psi_i(p_r,z(t_{r}^{-}))+ \displaystyle{\int_{p_r}^{t}}(t-s)^{\eta-1}T_{\eta}(t-s)\big[ Bu(s)\\~~\\
  +h(s,z(s))+ \phi_r(s,z(s))\big]ds,  \quad \text{for}\ t\in (p_r,t_{r+1}], ~~~r = 0,1,...,m \\

     \\
      \psi_r(t,z(t_{r}^{-})),  ~~~~~~~~~~~~~~~~\text{for} \ t \in (t_r,p_r], ~~~ r = 1,2,...,m
    \end{cases}
\end{eqnarray*}
\textbf{Case 1:} For $t \in (p_r,t_{r+1}], r=0,1,2,\ldots,m$
\begin{eqnarray}
&&(t-p_r)^{1-\eta} \|z(t)\| \nonumber \\
&&~~ \leq \|T_\eta (t-p_r)\psi_r(p_r,z(t_r^-))\| + (t-p_r)^{1-\eta}\int_{p_r}^{t} (t-s)^{\eta - 1} \big{\|} T_\eta (t-s) \big{[}Bu(s)\nonumber \\
&&~~~~~~+h(s,z(s))+\phi_r(s,z(s))\big{]} \big{\|} ds \nonumber \\
&&~~\leq \dfrac{M}{\Gamma(\eta)} \Bigg{[} c_r \|z(t_r)\| + (t-p_r)^{1-\eta} \int_{p_r}^{t} (t-s)^{\eta - 1} \big{[} \| Bu(s)\| + \|h(s,z(s))\| +\| \phi_r(s,z(s))\| \big{]}ds \Bigg{]} \nonumber \\
&&~~\leq \dfrac{M}{\Gamma(\eta)} \Bigg{[}c_r \dfrac{(t_r - p_{r-1})^{1-\eta} \|z(t_r)\|}{(t_r - p_{r-1})^{1-\eta}} + (t-p_r)^{1-\eta} \int_{p_r}^{t} (t-s)^{\eta - 1} \big{[} \|Bu(s)\| + \| \varsigma(s)\| + d (s-p_r)^{1-\eta}\|z(s)\| \big{]} ds \nonumber \\
&&~~~~~~+  (t-p_r)^{1-\eta}\int_{p_r}^{t} (t-s)^{\eta - 1} \| \phi_r(s,z(s)) \| ds \Bigg{]}\nonumber \\
&&~~ \leq \dfrac{M}{\Gamma(\eta)} \Bigg{[} \dfrac{c_r}{{(t_r - p_{r-1})^{1-\eta}}} \|z\|_{r-1} + {\bigg{(}\dfrac{q-1}{q \eta - 1 } \bigg{)}} ^ {\frac{q-1}{q}} (t_{r+1} - p_r)^ {1 - \frac{1}{q}}(\|Bu\|_{L^q} + \| \varsigma\|_{L^q})\nonumber \\
&&~~~~~~+(t-p_r)^{1-\eta}\int_{p_r}^{t} (t-s)^{\eta - 1} d (s-p_r)^{1-\eta}\|z(s)\|ds \nonumber \\
&&~~~~~~ + \dfrac{1}{\eta \Gamma(1-\eta)} \Bigg{(} \sum_{k=0}^{r-1} {\bigg{(} \dfrac{t_{k+1}-p_k}{p_r - t_{k+1}} \bigg{)}}^\eta \|z\|_k + \sum_{k=0}^{r-2} \dfrac{c_{k+1}(t_{r+1}-p_r)}{(p_r - p_{k+1})^\eta} \Bigg{)} \nonumber \\
&&~~~~~~ + c_r \Gamma(\eta) (t_{r+1}-p_r)^{1-\eta} \Bigg{]}. \nonumber 
\end{eqnarray}
Thus, 
\begin{eqnarray*}
(t-p_r)^{1-\eta} \|z(t)\| \leq \Lambda + \dfrac{Md(t_{r+1} - p_r)^{1-\eta}}{\Gamma(\eta)}\int_{p_r}^{t} (t-s)^{\eta - 1} (s-p_r)^{1-\eta}\|z(s)\|ds 
\end{eqnarray*}
Using generalised Gronwall's inequality (\cite{zhou}), it follows that 
\begin{eqnarray}
(t-p_r)^{1-\eta} \|z(t)\| \leq \Lambda E_{\eta} (Md\tau) \label{2c1}
\end{eqnarray}
\textbf{Case 2:} For $t \in (t_r,p_r], r=1,2,...,m$
\begin{eqnarray}
(t-p_{r-1})^{1-\eta} \|z(t)\| &=& (t-p_{r-1})^{1-\eta} \| \psi_r(t,z(t_r^-))\| \nonumber \\
&\leq & c_r (t-p_{r-1})^{1-\eta} \|z(t_r)\| \nonumber \\
&\leq & \Lambda E_{\eta} (Md\tau)  \label{2c2}
\end{eqnarray}
Clearly, on combining the results for both the cases discussed above, i.e., from eqns. (\ref{2c1}) and (\ref{2c2}), it is proved that 
\begin{eqnarray}
\|z\|_{[0,a]} \leq \Lambda E_{\eta} (Md\tau)
\end{eqnarray}
(ii) Similarly, proceeding to prove the next inequality, we have \\
\textbf{Case 1:} For $t \in (p_r,t_{r+1}], r=0,1,2,...,m$
\begin{eqnarray*}
&&(t-p_r)^{1-\eta} \|z(t) - y(t)\| \\
&&~~\leq \|T_\eta (t-p_r)\psi_r(p_r,z(t_r)) - \psi_r(p_r,y(t_r))\| \\
&&~~~~~~+ (t-p_r)^{1-\eta} \Bigg{(}\int_{p_r}^{t} (t-s)^{\eta - 1} \big{\|} T_\eta (t-s) \big{(}Bu(s) - Bv(s)\big{)} \big{\|}\\
&&~~~~~~ + \int_{p_r}^{t} (t-s)^{\eta - 1} \big{\|} T_\eta (t-s) \big{(} h(s,z(s))- h(s,y(s)) \big{)} \big{\|} \\
&&~~~~~~  + \int_{p_r}^{t} (t-s)^{\eta - 1} \big{\|} T_\eta (t-s) \big{(} \phi_r(s,z(s))-\phi_r(s,y(s)) \big{)} \big{\|} ds \Bigg{)}\\
&&~~ \leq \dfrac{M}{\Gamma(\eta)} \Bigg{[} b_r \|z(t_r) - y(t_r)\| + (t-p_r)^{1-\eta} \bigg{(}\int_{p_r}^{t} (t-s)^{\eta - 1} \|Bu(s) - Bv(s)\| ds\\
&&~~~~~~+ \int_{p_r}^{t} (t-s)^{\eta - 1} \tilde{\kappa}  {(s-p_r)^{1-\eta}} \|z(s)-y(s)\| ds \\
&&~~~~~~+ \int_{p_r}^{t} (t-s)^{\eta - 1} \| \phi_r(s,z(s))-\phi_r(s,y(s))\| \bigg{)} ds \Bigg{]}\\
&&~~\leq \dfrac{M}{\Gamma(\eta)} \Bigg{[} \dfrac{ b_r \|z-y\|_{r-1}}{(t_r - p_{r-1})^{1-\eta}} + (t-p_r)^{1 - \frac{1}{q}}{\bigg{(}\dfrac{q-1}{q \eta - 1 } \bigg{)}} ^ {\frac{q-1}{q}} \|Bu-Bv\|_{L^q} \\
&&~~~~~~+ \tilde{\kappa}(t-p_r)^{1-\eta} \int_{p_r}^{t} (t-s)^{\eta - 1} {(s-p_r)^{1-\eta}} \|z(s)-y(s)\| ds + \dfrac{\|z-y\|_{[0,a]}}{\eta \Gamma(1-\eta)} \Bigg{(} \sum_{k=0}^{r-1} {\Big{(}\dfrac{t_{k+1} - p_k}{p_r - t_{k+1}} \Big{)}}^\eta \\
&&~~~~~~+ \sum_{k=0}^{r-2} \dfrac{b_{k+1}{(t_{r+1} - p_r)}}{{(t_{k+1} - p_k)}^{1-\eta}{(p_r - p_{k+1})}^\eta}+ \dfrac{b_r (t_{r+1}-p_r)^{1-\eta}}{(t_{r}-p_{r-1})^{1-\eta}} \Gamma(1+\eta) \Gamma(1-\eta) \Bigg{)} \Bigg{]} \\
&&~~ \leq\dfrac{M}{\Gamma(\eta)} \Bigg{[}\dfrac{b_r\|z-y\|_{[0,a]}}{(t_r - p_{r-1})^{1-\eta}} + (t_{r+1}-p_r)^{1 - \frac{1}{q}}{\bigg{(}\dfrac{q-1}{q \eta - 1 } \bigg{)}} ^ {\frac{q-1}{q}} \|Bu-Bv\|_{L^q} \\
&&~~~~~~+ \tilde{\kappa} (t_{r+1}-p_r)^{1-\eta}\int_{p_r}^{t} (t-s)^{\eta - 1} {(s-p_r)^{1-\eta}} \|z(s)-y(s)\| ds + \dfrac{\|z-y\|_{[0,a]}}{\eta \Gamma(1-\eta)} \Bigg{(} \sum_{k=0}^{r-1} {\Big{(}\dfrac{t_{k+1} - p_k}{p_r - t_{k+1}} \Big{)}}^\eta \\
&&~~~~~~+ \sum_{k=0}^{r-2} \dfrac{b_{k+1}{(t_{r+1} - p_r)}}{{(t_{k+1} - p_k)}^{1-\eta}{(p_r - p_{k+1})}^\eta}+ \dfrac{b_r (t_{r+1}-p_r)^{1-\eta}}{(t_{r}-p_{r-1})^{1-\eta}} \Gamma(1+\eta) \Gamma(1-\eta) \Bigg{)} \Bigg{]} \\
&&~~\leq \mu \|z-y\|_{[0,a]} + \dfrac{M\tau^{1 - \frac{1}{q}}}{\Gamma(\eta)}{\bigg{(}\dfrac{q-1}{q \eta - 1 } \bigg{)}} ^ {\frac{q-1}{q}} \|Bu-Bv\|_{L^q}\\
&&~~~~~~ + \dfrac{M\tilde{\kappa}\tau^{1-\eta}}{\Gamma(\eta)} \int_{p_r}^{t} (t-s)^{\eta - 1} {(s-p_r)^{1-\eta}} \|z(s)-y(s)\| ds
\end{eqnarray*}
On applying generalised Gronwall's identity (\cite{zhou}), it leads to 
\begin{eqnarray}
(t-p_r)^{1-\eta} \|z(t) - y(t)\| \leq \left( \mu \|z-y\|_{[0,a]} + \varrho \|Bu-Bv\|_{L^q} \right) E_{\eta}(M\tilde{\kappa}\tau) \label{lemmacase1}
\end{eqnarray}
\textbf{Case 2:} For $t \in (t_r,p_r], r=1,2,\ldots,m$
\begin{eqnarray}
&&(t-p_{r-1})^{1-\eta} \|z(t) - y(t)\| \nonumber \\ 
&&\quad = (t-p_{r-1})^{1-\eta} \|\psi_r(t,z(t_{r}^{-}) - \psi_r(t,y(t_{r}^{-})\| \nonumber \\
&&\quad \leq b_r (t-p_{r-1})^{1-\eta} \| z(t_r) - y(t_r) \|_Z \nonumber \\
&&\quad \leq  \left( \mu \|z-y\|_{[0,a]} + \varrho \|Bu-Bv\|_{L^q} \right) E_{\eta}(M\tilde{\kappa}\tau) \label{lemmacase2}
\end{eqnarray}
On combining the results from Case 1 and Case 2, i.e. from eqns. (\ref{lemmacase1}) and (\ref{lemmacase2}), the following is obtained  
\begin{eqnarray*}
\|z-y\|_{[0,a]} \leq \left( \mu \|z-y\|_{[0,a]} + \varrho \|Bu-Bv\|_{L^q} \right) E_{\eta}(M\tilde{\kappa} \tau)
\end{eqnarray*}
and this leads to 
\begin{eqnarray}
\|z-y\|_{[0,a]} \leq \dfrac{\varrho E_\eta (M\tilde{\kappa}\tau)}{1-\mu E_\eta (M\tilde{\kappa} \tau)} \|Bu-Bv\|_{L^q}
\end{eqnarray}
This accomplishes the proof.
\end{proof}

\begin{theorem}
The nonlinear control system (\ref{system}) becomes approximately controllable, provided the assumptions $(H0), (H1)$ and $(H3)-(H5)$ holds true and A generates the differentiable semigroup $T(t)$.
\end{theorem}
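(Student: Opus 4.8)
The goal is to show that the reachable set $K_a(h)$ is dense in $Z$; the argument is an iterative construction of controls on the terminal interval $(p_m,a]$ that separates the linear part of the dynamics from the nonlinear and impulsive contributions. With the control held at $0$ on $[0,p_m]$ the mild solution at time $a$ takes the form
\[
z(a;u)=\zeta_0+\mathbb{F}\big[Bu+\Omega_h(z)+\Omega_{\phi_m}(z)\big],\qquad
\zeta_0:=(a-p_m)^{\eta-1}T_\eta(a-p_m)\psi_m\big(p_m,z(t_m^-)\big),
\]
and $\zeta_0$ is then a fixed vector of $Z$, being determined by the (zero) control on $[0,p_m]$. I would first record that $\mathrm{Range}(\mathbb{F})$ is dense in $Z$: for $x\in Z$ and $\delta>0$ take $v_\delta=\Gamma(\eta+1)\,\delta^{-\eta}\chi_{[a-\delta,a]}\,x$, where $\chi_{[a-\delta,a]}$ is the characteristic function of $[a-\delta,a]$; then $v_\delta\in L^q([p_m,a];Z)$ and, using the strong continuity of $T_\eta$ recorded in Section~\ref{2} together with $T_\eta(0)=\Gamma(\eta)^{-1}I$, one gets $\mathbb{F}v_\delta\to x$ as $\delta\to0^{+}$. (This regularity of the resolvent family, inherited from differentiability of $T(t)$, is also what legitimises the linear approximate reachability encapsulated in $(H8)$.) Consequently it suffices to show $\zeta_0+\mathbb{F}\vartheta\in\overline{K_a(h)}$ for every $\vartheta\in L^q([p_m,a];Z)$, since this forces $\overline{K_a(h)}\supseteq\zeta_0+\overline{\mathrm{Range}(\mathbb{F})}=Z$.

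Fix $\vartheta$ and $\epsilon>0$ and set $\epsilon_n=\epsilon\,2^{-n}$. Let $z_0$ be the mild solution driven by $u\equiv0$ (existence and uniqueness from the theorem of Section~\ref{4}) and put $\delta_n:=\Omega_h(z_n)+\Omega_{\phi_m}(z_n)\in L^q([p_m,a];Z)$. Build $\{z_n\}$ inductively: apply $(H8)$ to $\vartheta-\delta_0$ with accuracy $\epsilon_1$ to obtain a control $u_1$ with $\|\mathbb{F}(\vartheta-\delta_0)-\mathbb{F}Bu_1\|<\epsilon_1$ and $\|Bu_1\|_{L^q}\le\aleph\|\vartheta-\delta_0\|_{L^q}$; then, given $z_n$ ($n\ge1$), apply $(H8)$ to the increment $\delta_{n-1}-\delta_n$ with accuracy $\epsilon_{n+1}$ to obtain $w_{n+1}$ with $\|\mathbb{F}(\delta_{n-1}-\delta_n)-\mathbb{F}Bw_{n+1}\|<\epsilon_{n+1}$ and $\|Bw_{n+1}\|_{L^q}\le\aleph\|\delta_{n-1}-\delta_n\|_{L^q}$, set $u_{n+1}=u_n+w_{n+1}$, and let $z_{n+1}$ be the corresponding mild solution (always with zero control on $[0,p_m]$, so that $\zeta_0$ is unchanged). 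Telescoping the defining inequalities gives $\|\mathbb{F}Bu_n-\mathbb{F}(\vartheta-\delta_{n-1})\|<\sum_{k\le n}\epsilon_k\le\epsilon$, and since $z_n(a)=\zeta_0+\mathbb{F}Bu_n+\mathbb{F}\delta_n$ one obtains
\[
\big\|z_n(a)-(\zeta_0+\mathbb{F}\vartheta)\big\|\le\epsilon+\|\mathbb{F}\|\,\|\delta_n-\delta_{n-1}\|_{L^q}.
\]

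It remains to prove $\{z_n\}$ is Cauchy in $PC_{1-\eta}([0,a];Z)$, which forces $\|\delta_n-\delta_{n-1}\|_{L^q}\to0$ and closes the estimate above. Since $u_n$ and $u_{n-1}$ agree on $[0,p_m]$, the mild solutions $z_n,z_{n-1}$ agree there as well, hence $\Omega_{\phi_m}(z_n)=\Omega_{\phi_m}(z_{n-1})$ on $(p_m,a]$ and, using $(H5)$,
\[
\|\delta_{n-1}-\delta_n\|_{L^q([p_m,a];Z)}=\|\Omega_h(z_n)-\Omega_h(z_{n-1})\|_{L^q}\le\tilde\kappa\,(a-p_m)^{1/q}\,\|z_n-z_{n-1}\|_{[0,a]},
\]
which is bounded by the bracketed factor of (\ref{mainass}) times $\|z_n-z_{n-1}\|_{[0,a]}$ (in a formulation permitting nonzero controls on $[0,p_m]$ the remaining $\phi_r$-convolutions supply the other summands of that bracket, estimated exactly as in the proof of the existence theorem). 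Combining this with part~(ii) of the Lemma preceding this theorem — which requires $(H7)$ so that $1-\mu E_\eta(M\tilde\kappa\tau)>0$ — and with $\|Bu_{n+1}-Bu_n\|_{L^q}=\|Bw_{n+1}\|_{L^q}\le\aleph\|\delta_{n-1}-\delta_n\|_{L^q}$ yields
\[
\|z_{n+1}-z_n\|_{[0,a]}\le\Theta\,\|z_n-z_{n-1}\|_{[0,a]},\qquad
\Theta=\aleph\Big[\tilde\kappa(t_{m+1}-p_m)^{1/q}+\cdots\Big]\frac{\varrho\,E_\eta(M\tilde\kappa\tau)}{1-\mu E_\eta(M\tilde\kappa\tau)}<1
\]
by the smallness assumption (\ref{mainass}). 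Thus $\{z_n\}$ converges, $z_n(a)$ lies within $\epsilon$ of $\zeta_0+\mathbb{F}\vartheta$ for large $n$, and since each $z_n(a)\in K_a(h)$ and $\epsilon$ is arbitrary, $\zeta_0+\mathbb{F}\vartheta\in\overline{K_a(h)}$. With the density of $\mathrm{Range}(\mathbb{F})$ this gives $\overline{K_a(h)}=Z$, i.e.\ approximate controllability.

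The crux is this final contraction estimate: a perturbation of the control must be propagated through the genuine nonlinear state equation (part~(ii) of the Lemma) and through the singular convolutions $\phi_r$, and one must verify that the composite Lipschitz constant is precisely the quantity that $(H8)$ forces to be $<1$. Assembling the $\phi_r$-contributions correctly — the same book-keeping encountered in the existence proof — is what dictates the exact form of (\ref{mainass}); everything else is a routine telescoping/fixed-point argument together with the density of $\mathrm{Range}(\mathbb{F})$.
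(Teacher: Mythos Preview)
Your argument is correct, and the iterative Cauchy-sequence device is the same one the paper employs, but you organise the proof differently in two respects. First, the paper repeats the construction on every subinterval $[p_r,t_{r+1}]$, establishing approximate controllability on $[0,t_1]$, then on $[0,t_2]$, and so on up to $[0,a]$; you instead fix the control to zero on $[0,p_m]$ and work only on the terminal interval $(p_m,a]$, which is shorter and---since only $K_a(h)$ matters for the stated theorem---sufficient. Second, and more substantively, where the paper exploits differentiability of $T(t)$ to exhibit an explicit $\zeta$ (involving $\tfrac{d}{dt}T_\eta$, taken from \cite{liu:siam}) with $\mathbb{F}\zeta=\wp-\zeta_0$ \emph{exactly} for $\wp\in D(A)$, you prove density of $\mathrm{Range}(\mathbb{F})$ in $Z$ by an elementary approximation with characteristic functions, using only the strong continuity of $T_\eta$ at $0$; your route therefore does not actually use the differentiability hypothesis, which is consistent with the paper's own Remark that differentiability can be dropped if $z_0,\psi_r(p_r,z(t_r))\in D(A)$. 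One housekeeping point: the theorem as stated lists only $(H0),(H1),(H3)$--$(H5)$, yet both the paper's proof and yours rely on $(H7)$ (so that $1-\mu E_\eta(M\tilde\kappa\tau)>0$ in the Lemma) and on $(H8)$ (for the iterative control construction and the contraction~(\ref{mainass})); this is an omission in the paper's hypothesis list rather than a gap in your argument.
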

\begin{proof}
In order to manifest approximate controllability of nonlinear control system (\ref{system}), 
it is adequate to claim that $D(A)\subset \overline{K_a(g)}$, as it is well known that domain of A, $D(A)$ is dense in Z. \\
\indent First, we will prove the approximate controllability of (\ref{system}) in $[0,t]$ for $t \in (0,t_1]$. For any $z_0 \in Z$, it is understood that ${t_1}^{\eta-1} T_\eta(t_1)z_0 \in D(A)$ because $T(t)$ is differential semigroup (implies $D(A)=Z$). Now, for $\wp \in D(A)$, existence of  a function $\zeta \in L^q([0,t_1];Z)$ can be shown such that 
$\mathbb{F}\zeta = \wp - {t_1}^{\eta-1} T_\eta(t_1)z_0$, like \\
$\zeta(t) = \dfrac{(\Gamma(\eta))^2 (t_1-t)^{1-\eta}}{t_1} \bigg{[} T_\eta(t_1-t)\\
 ~~~~~~~~~+2t  \dfrac{d T_\eta(t_1-t)}{dt} \bigg{]} \left( \wp - {t_1}^{\eta-1} T_\eta(t_1)z_0 \right),~t\in(0,t_1)$~ (see \cite{liu:siam}).\\
Next step is to show the existence of a control function $u_{\epsilon}^0(\cdot) \in L^q([0,t_1];U)$ in a way that the underneath inequality holds \\
$$ \| \wp-{t_1}^{\eta-1} T_\eta(t_1)z_0 - \mathbb{F} \Omega_h(z_\epsilon) - \mathbb{F}Bu_{\epsilon}^0 \|_Z < \epsilon$$\\
where $z_\epsilon(t)$ is a mild solution of the system(\ref{system}) in accord with the control $u_\epsilon^0(t)$.
With the use of hypothesis (H8), we can say that for any given $\epsilon >0$ and $u_{1}^{0}(\cdot) \in L^q([0,t_1];U)$, there exists a control $u_{2}^{0} (\cdot) \in L^q([0,t_1];U)$ satisfying \\
$$\| \wp-{t_1}^{\eta-1} T_\eta(t_1)z_0 - \mathbb{F} \Omega_h(z_1) - \mathbb{F}Bu_2^0 \|_Z < \dfrac{\epsilon}{2^2}$$ where $z_1(t) = z(t;u_1^0),~ t\in [0,t_1]$. Denote $z_2(t) = z(t,u_2^0)$, again by hypothesis (H8), there exists $\omega_2^0 \in L^q([0,t_1];U)$ satisfying
\begin{eqnarray*}
\| \mathbb{F}[\Omega_h(z_2)-\Omega_h(z_1)]- \mathbb{F}(B\omega_2^0)\|_{Z} < \frac{\epsilon}{2^3}
\end{eqnarray*}
and
\begin{eqnarray*}
\|B\omega_2^0\|_{L^q} &\leq & \aleph \| \Omega_h(z_2)-\Omega_h(z_1)\|_{L^q}\\
& = & \aleph \left( \int_{0}^{t_{1}} \| h(t,z_2(t)) - h(t,z_1(t)) \|_Z^q dt \right)^{\frac{1}{q}}\\
&\leq & \aleph \tilde{\kappa} \left( \int_{0}^{t_{1}} \left( t^{1-\eta}\| z_2(t) - z_1(t) \|_Z \right) ^q dt \right)^{\frac{1}{q}}\\
&\leq & \aleph \tilde{\kappa} t_1^{\frac{1}{q}} \|z_2 - z_1 \|_0 \\
&\leq & \aleph \tilde{\kappa} t_1^{\frac{1}{q}} \dfrac{\varrho E_\eta (M\tilde{\kappa}\tau)}{1-\mu E_\eta (M\tilde{\kappa}\tau)} \|Bu_2^0-Bu_1^0\|_{L^q}
\end{eqnarray*}
Now, define
\begin{eqnarray*}
u_3^0(t) = u_2^0(t) - \omega_2^0(t), ~~~u_3^0(t) \in U,
\end{eqnarray*}
then 
\begin{eqnarray*}
&&\| \wp-{t_1}^{\eta-1} T_\eta(t_1)z_0 - \mathbb{F} \Omega_h(z_2) - \mathbb{F}Bu_3^0 \|_Z \\
&& \quad \leq \| \wp-{t_1}^{\eta-1} T_\eta(t_1)z_0 - \mathbb{F} \Omega_h(z_1) - \mathbb{F}Bu_2^0 \|_Z + \|\mathbb{F}B\omega_2^0 - [\mathbb{F}\Omega_g(z_2)- \mathbb{F}\Omega_g(z_1)]\|_{Z}\\
&& \quad < \bigg{(} \frac{1}{2^2} + \frac{1}{2^3} \bigg{)} \epsilon .
\end{eqnarray*}
By applying inductions, a sequence $\{u_n^0 \}$ in $L^q([0,t_1];U)$ is obtained such that 
\begin{eqnarray*}
\| \wp-{t_1}^{\eta-1} T_\eta(t_1)z_0 - \mathbb{F}(z_n) - \mathbb{F}Bu_{n+1}^0\|_{Z} < \bigg{(} \frac{1}{2^2} + \frac{1}{2^3} + ... +\frac{1}{2^{n+1}} \bigg{)} \epsilon,
\end{eqnarray*}
where $z_n(t) = z(t,u_n^0(t))$ and   
\begin{eqnarray*}
&&\| Bu_{n+1}^0 - Bu_{n}^0\|_{L^q} <  \aleph \tilde{\kappa} t_1^{\frac{1}{q}} \dfrac{\varrho E_\eta (M\tilde{\kappa}\tau)}{1-\mu E_\eta (M\tilde{\kappa}\tau)} \|Bu_{n}^0 - Bu_{n-1}^0\|_{L^q}
\end{eqnarray*}
By (\ref{mainass}), it is evident that the sequence $\{Bu_n^0\}_{n \in \mathbb{N}}$ is a cauchy sequence in $L^q([0,t_1];Z)$. Thus, for any $\epsilon > 0$, a positive integer $n_0$ can be found satisfying 
$$\| \mathbb{F}Bu_{n_0 + 1}^0 - \mathbb{F}Bu_{n_0}^0\|_Z < \frac{\epsilon}{2}.$$
Therefore, 
\begin{eqnarray*}
&&\| \wp-{t_1}^{\eta-1} T_\eta(t_1)z_0 - \mathbb{F}\Omega_h(z_{n_0}) - \mathbb{F}Bu_{n_0}^{0}\|_Z \\
&& \quad \leq \| \wp-{t_1}^{\eta-1} T_\eta(t_1)z_0 - \mathbb{F}\Omega_h(z_{n_0}) - \mathbb{F}Bu_{n_0 +1}^0 \|_Z + \| \mathbb{F}Bu_{n_0 + 1}^0 - \mathbb{F}Bu_{n_0}^0\|_Z\\
&& \quad < \left( \frac{1}{2^2} + \frac{1}{2^3} + \cdots + \frac{1}{2^{n_0 + 1}} \right)\epsilon + \frac{\epsilon}{2} < \epsilon.
\end{eqnarray*}
Thus, the approximate controllability of (\ref{system}) is proved in the interval $[0,t_1]$.
\indent Further, we need to prove approximate controllability in $[0,t]$ for $t \in (p_1,t_2]$.\\
For any $\psi_1(p_1,z(t_1)) \in Z$, it is understood that ${(t_2-p_1)}^{\eta-1} T_\eta(t_2 - p_1)\psi_1(p_1,z(t_1)) \in D(A)$ because $T(t)$ is differential semigroup. Now, for $\wp \in D(A)$, existence of  a function $\zeta_1 \in L^q([p_1,t_2];Z)$ can be shown such that 
$\mathbb{F}\zeta_1 = \wp - {(t_2-p_1)}^{\eta-1} T_\eta(t_2 - p_1)\psi_1(p_1,z(t_1))$. \\
\indent Next step is to show the existence of a control function $u_\epsilon^1(\cdot) \in L^q([p_1,t_2];U)$ in a way that the underneath inequality holds \\
$$ \| \wp - {(t_2-p_1)}^{\eta-1} T_\eta(t_2 - p_1)\psi_1(p_1,z(t_1)) - \mathbb{F} \Omega_h(z_\epsilon) - \mathbb{F} \Omega_{\phi_1}(z_\epsilon)- \mathbb{F}Bu_\epsilon^1 \|_Z < \epsilon$$\\
where $z_\epsilon(t)$ is a mild solution of the system(\ref{system}) in accord with the control $u_\epsilon^1(t)$ and 
$$\wp - {(t_2-p_1)}^{\eta-1} T_\eta(t_2 - p_1)\psi_1(p_1,z(t_1))= \wp^* \in D(A).$$
With the use of hypothesis (H8), we can say that for any given $\epsilon >0$ and $u_{1}^{1}(\cdot) \in L^q([p_1,t_2];U)$, there exists a control $u_{2}^{1} (\cdot) \in L^q([p_1,t_2];U)$ satisfying \\
$$\| \wp^* - \mathbb{F} \Omega_h(z_1) - \mathbb{F} \Omega_{\phi_1}(z_1) -  \mathbb{F}Bu_2^1 \|_Z < \dfrac{\epsilon}{2^2}$$ where $z_1(t) = z(t;u_1^1),~ t\in (p_1,t_2]$. Denote $z_2(t) = z(t,u_2^1)$, again by hypothesis (H8), there exists $\omega_2^1 \in L^q([p_1,t_2];U)$ satisfying
\begin{eqnarray*}
\| \mathbb{F}[\Omega_h(z_2)-\Omega_h(z_1)]+ \mathbb{F} [\Omega_{\phi_1}(z_2)- \Omega_{\phi_2}(z_1)] - \mathbb{F}(B\omega_2^1)\|_{Z} < \frac{\epsilon}{2^3}
\end{eqnarray*}
and
\begin{eqnarray*}
&&\|B\omega_2^1\|_{L^q} \\
&& \quad \leq \aleph \left[ \| \Omega_h(z_2)-\Omega_h(z_1)\|_{L^q} + \| \Omega_{\phi_1}(z_2) - \Omega_{\phi_1}(z_1)\| \right] \\
&& \quad =  \aleph \left[ \left( \int_{0}^{t_{1}} \| h(t,z_2(t)) - h(t,z_1(t)) \|_Z^q dt \right)^{\frac{1}{q}} + \left( \int_{p_1}^{t_{2}} \| \phi_1(t,z_2(t)) - \phi_1(t,z_1(t)) \|_Z^q dt \right)^{\frac{1}{q}} \right] \\
&& \quad \leq \aleph \Bigg[ \tilde{\kappa} (t_2 - p_1)^{\frac{1}{q}}\|z_2 - z_1\|_1
 + \dfrac{1}{\Gamma(1-\eta)}\Bigg( \dfrac{(t_1-p_0)^\eta}{(p_1-t_1)^{1+\eta}} (t_2-p_1)^{\frac{1}{q}}\\
&& \quad \quad \quad + \dfrac{b_1}{(t_1-p_0)^{1-\eta}} \left(\int_{p_1}^{t_2}(t-p_1)^{-\eta q} dt \right)^{1/q} \Bigg) \|z_2 - z_1\|_0 \Bigg]\\
&&  \quad \leq \aleph \Bigg[ \tilde{\kappa} (t_2 - p_1)^{\frac{1}{q}}+ \dfrac{1}{\Gamma(1-\eta)}\Bigg( \dfrac{(t_1-p_0)^\eta}{(p_1-t_1)^{1+\eta}} (t_2-p_1)^{\frac{1}{q}}\\
&& \quad \quad \quad + \dfrac{b_1 (t_2-p_1)^{\frac{1}{q}-\eta}}{(t_1-p_0)^{1-\eta} (1-\eta q)^{\frac{1}{q}}} \Bigg) \Bigg]\dfrac{\varrho E_\eta (M\tilde{\kappa}\tau)}{1-\mu E_\eta (M\tilde{\kappa}\tau)} \|Bu_{2}^1 - Bu_{1}^1\|_{L^q}
\end{eqnarray*}
Now, define
\begin{eqnarray*}
u_3^1(t) = u_2^1(t) - \omega_2^1(t), ~~~u_3^1(t) \in U,
\end{eqnarray*}
then 
\begin{eqnarray*}
&&\| \wp^* - \mathbb{F} \Omega_h(z_2) - \mathbb{F} \Omega_{\phi_1}(z_2)-\mathbb{F}Bu_3^1 \|_Z \\
&& \quad \leq \| \wp^* - \mathbb{F} \Omega_h(z_1) - \mathbb{F} \Omega_{\phi_1}(z_1)-\mathbb{F}Bu_2^1 \|_Z \\
&& \quad \quad + \| \mathbb{F}B\omega_2^1 - [\mathbb{F}\Omega_h(z_2)- \mathbb{F}\Omega_h(z_1)] - [\mathbb{F}\Omega_{\phi_1}(z_2)- \mathbb{F}\Omega_{\phi_1}(z_1)]\|_{Z}\\
&& \quad < \bigg{(} \frac{1}{2^2} + \frac{1}{2^3} \bigg{)} \epsilon .
\end{eqnarray*}
By applying inductions, a sequence $\{u_n^1 \}$ in $L^q([p_1,t_2];U)$ is obtained such that 
\begin{eqnarray*}
\| \wp^* - \mathbb{F}\Omega_h(z_n) - \mathbb{F}\Omega_{\phi_1}(z_n)- \mathbb{F}Bu_{n+1}^0\|_{Z} < \bigg{(} \frac{1}{2^2} + \frac{1}{2^3} + \cdots +\frac{1}{2^{n+1}} \bigg{)} \epsilon,
\end{eqnarray*}
where $z_n(t) = z(t,u_n^1(t))$ and   
\begin{eqnarray*}
&&\| Bu_{n+1}^1 - Bu_{n}^1\|_{L^q} \\
&& \quad < \aleph \Bigg[ \tilde{\kappa} (t_2 - p_1)^{\frac{1}{q}}+ \dfrac{1}{\Gamma(1-\eta)}\bigg( \dfrac{(t_1-p_0)^\eta}{(p_1-t_1)^{1+\eta}} (t_2-p_1)^{\frac{1}{q}}\\
&& \quad \quad \quad + \dfrac{b_1 (t_2-p_1)^{\frac{1}{q}-\eta}}{(t_1-p_0)^{1-\eta} (1-\eta q)^{\frac{1}{q}}} \bigg) \Bigg]\dfrac{\varrho E_\eta (M\tilde{\kappa}\tau)}{1-\mu E_\eta (M\tilde{\kappa}\tau)} \|Bu_{n}^1 - Bu_{n-1}^1\|_{L^q}
\end{eqnarray*}
By (\ref{mainass}), it is evident that the sequence $\{Bu_n^1\}_{n \in \mathbb{N}}$ is a cauchy sequence on $L^q([p_1,t_2];Z)$. Thus, for any $\epsilon > 0$, a positive integer $n_0$ can be found satisfying 
$$\| \mathbb{F}Bu_{n_0 + 1}^1 - \mathbb{F}Bu_{n_0}^1\|_Z < \frac{\epsilon}{2}.$$
Therefore, 
\begin{eqnarray*}
&&\| \wp^* - \mathbb{F}\Omega_h(z_{n_0})-\mathbb{F}\Omega_{\phi_1}(z_{n_0})- \mathbb{F}Bu_{n_0}^1\|_Z \\
&& \quad \leq \| \wp^* - \mathbb{F}\Omega_h(z_{n_0}) -\mathbb{F}\Omega_{\phi_1}(z_{n_0})- \mathbb{F}Bu_{n_0 +1}^1 \|_Z + \| \mathbb{F}Bu_{n_0 + 1}^1 - \mathbb{F}Bu_{n_0}^1\|_Z\\
&& \quad < \left( \frac{1}{2^2} + \frac{1}{2^3} + \cdots + \frac{1}{2^{n_0 + 1}} \right)\epsilon + \frac{\epsilon}{2} < \epsilon.
\end{eqnarray*}
Thus, the approximate controllability of (\ref{system}) is proved in the interval $[0,t_2]$.
Similarly, repeating the process for $r=2,3,\ldots,m$, we finally get 
$$ \| \wp - {(a-p_m)}^{\eta-1} T_\eta(a - p_m)\psi_1(p_m,z(t_m)) - \mathbb{F} \Omega_h(z_\epsilon) - \mathbb{F} \Omega_{\phi_m}(z_\epsilon)- \mathbb{F}Bu_\epsilon^m \|_Z < \epsilon$$ 
which establishes the approximate controllability of the system (\ref{system}) in $[0,a]$.
\begin{remark}
It should be noted that if the semigroup $T(t)$ is not differentiable, the proposed concepts in the above theorem holds valid for $z_0, \psi_r(p_r,z(t_r)) \in D(A)$ instead of $z_0, \psi_r(p_r,z(t_r)) \in Z$.\\
\end{remark}

\end{proof}

\section{Examples} \label{6}
\begin{example}
Examine the following fractional differential problem with Riemann-Liouville derivative involving non-instantaneous impulses:
\begin{eqnarray}
	&&_{0}D_{t}^{\frac{2}{3}}z(t,n) = A z(t,n) + Bu(t,n) + h(t,z(t,n)), ~t \in \cup_{r=0}^{m}(p_r,t_{r+1}] \subset [0,1],~n \in \mathbb{N},  \nonumber \\
&&z(t,n) = \psi_r(t,z(t_r,n)) , ~~ t \in \cup_{r=1}^{m}(t_r,p_r], ~n\in \mathbb{N}, \label{sysex1} \\
&&_{0}I_{t}^{\frac{1}{3}}z(t,x)|_{t=0} = z_{0,n},~~n \in \mathbb{N} \nonumber\\
&&_{p_r}I_t^{\frac{1}{3}}z(t,n)|_{t=p_r} = \psi_r(p_r, z(t_r,n)), ~~ n \in \mathbb{N}. \nonumber
\end{eqnarray}

The system (\ref{sysex1}) can be written as system (\ref{system}) in abstract form and thus follows approximate controllability from Theorem 2 under assumptions $H(1)$-$H(8)$.
\end{example}

\begin{remark}
The example presented above supports the methodology and points out the correction in example given in article \cite{liu:siam}.
\end{remark}

\begin{example}
Examine the below mentioned initial value problem with Riemann-Liouville derivative involving non-instantaneous impulses:
\begin{eqnarray}
	&&_{0}D_{t}^{\frac{2}{3}}z(t,x) = \frac{\partial^2}{\partial x^2} z(t,x) + u(t) + h(t,z(t,x)), ~t \in \cup_{r=0}^{m}(p_r,t_{r+1}] \subset [0,1],~x \in [0,\pi],  \nonumber \\
&&z(t,x) = \psi_r(t,z(t_r^-,x)) , ~~ t \in \cup_{r=1}^{m}(t_r,p_r], ~x\in [0,\pi], \label{sysex2} \\
&&z(t,0) = 0 = z(t,\pi), ~ t \in (0,1] \nonumber \\
&&_{0}I_{t}^{\frac{1}{3}}z(t,x)|_{t=0} = z_0(x) \in D(A) ,~~x \in [0,\pi] \nonumber\\
&&_{p_r}I_t^{\frac{1}{3}}z(t,x)|_{t=p_r} = \psi_r(p_r, z(t_r,x)) \in D(A), ~~ x \in [0,\pi] \nonumber
\end{eqnarray}
Let $W = W^{'} = L^2([0,\pi])$, the map $B=I$ and the operator $A:D(A) \subset W \rightarrow W$ defined as 
\begin{eqnarray*}
Aw = w''
\end{eqnarray*}
where
\begin{eqnarray*}
D(A) =  &&\Bigg \{ w\in W ~\mid~ w, ~\frac{\partial w}{\partial x} \text{ ~are absolutely continuous},~ \frac{\partial^2 w}{\partial x^2} \in W\\
&&\quad \text{ and } w(0) = 0 = w(\pi) \Bigg \}
\end{eqnarray*}
Then, A can be written in the form of  
\begin{eqnarray*}
Aw=  \sum_{l=1}^{\infty} (-l^2) \langle w, \alpha_l \rangle \alpha_l, ~~w \in D(A)
\end{eqnarray*}
where $\alpha_l(x)= \sqrt{\frac{2}{\pi}}sin~ lx~ (l \in \mathbb{N})$ are the eigen functions corresponding to the eigen values $-l^2$ respectively and $\{ \alpha_1, \alpha_2,\ldots\} $ forms an orthonormal basis of W. A semigroup $T(t) (t>0)$ in W having A as its infinitesimal generator is written as 
\begin{eqnarray*}
T(t)w = \sum_{l=1}^{\infty} e^{-l^2t} \langle w, \alpha_l \rangle \alpha_l, ~~w \in W~~ \text{and}~~ \|T(t)\| \leq e^{-1},~ M = 1.
\end{eqnarray*}
Let us choose the nonlinear function $h$ as \\
$h(t,z(t,x))= 1+(t-p_r)^2 + \delta (t-p_r)^\beta [ z(t,x) + sin z(t,x)]$,\\ where $t\in (p_r,p_{r+1}]$ and $r=0,1,\ldots,m$. $\delta$ and $\beta$ are constants and $\beta \geq 1-\eta$.\\
Now, 
\begin{eqnarray*}
\|h(t,z(t,x)) - h(t,y(t,x))\| &\leq & |\delta| (t-p_r)^\beta \| z(t,x)-y(t,x) + sin z(t,x) - sin y(t,x)\|\\
&\leq & |\delta| (t-p_r)^{\beta + \eta -1}(t-p_r)^{1 - \eta}\big[\|z(t,x)-y(t,x)\|\\
 & \quad + & \|2cos(\dfrac{z(t,x)+y(t,x)}{2})sin(\dfrac{z(t,x)-y(t,x)}{2})\| \big]\\
 & \leq & 2|\delta |(t-p_r)^{1 - \eta} \| z(t,x)-y(t,x) \|\\
 & \leq & 2|\delta| \| z(t,x)-y(t,x) \|
\end{eqnarray*}
and 
\begin{eqnarray*}
\|h(t,z(t,x))\| &\leq & 1+(t-p_r)^2 + |\delta | (t-p_r)^\beta \| sin z(t,x) + z(t,x) \|\\
& \leq & 1+(t-p_r)^2 + 2|\delta | (t-p_r)^{\beta + \eta -1}(t-p_r)^{1 - \eta}  \|z(t,x)\|\\
& \leq & (1+(t-p_r)^2) + 2|\delta | (t-p_r)^{1 - \eta} \|z(t,x)\|
\end{eqnarray*}
It is evident that assumptions $(H1),(H2)$ and $(H5)$ are satisfied with $\kappa = \tilde{\kappa} = d = 2|\delta |$. Similarly, the assumptions $(H3)$ and $(H6)$ are satisfied by choosing suitable impulsive functions. 
Further, the assumptions $(H4),(H7)$ and $(H8)$ are satisfied by choosing $\delta$ to be sufficiently close to zero. Thus, approximate controllability of (\ref{sysex2}) follows from Theorem $2$ without assuming $T(t)$ as differentiable semigroup.
%
\end{example}

\section{Concluding Remarks} \label{7}
The article established the results for existence and approximate controllability of the non-instantaneous impulsive fractional differential systems involving Riemann-Liouville derivatives. The definition of the mild solution of the system (\ref{1}) has been constructed utilizing the concept of fractional semigroup and appropriate integral type initial conditions. The existence and uniqueness results have been derived with the help of Banach's fixed point approach. Approximate controllability has been established using Lemma 3 along with iterative techniques for control sequence.
The study of non-instantaneous impulsive systems covers a wide range of applications. 
The proposed future work 
 emerges from relaxing the Lipschitz continuity on the nonlinear operator. If the nonlinear operator $h$ is not Lipschitz, then even existence of solution is the matter of prime investigation. Also, the 
present findings can be extended for the partial approximate controllability or finite approximate controllability of the considered system  with general nonlocal conditions. For some idea, see \cite{abdul:partial,mahm2,mahm3,jde1}.
%

\end{document}